\documentclass[pdflatex,sn-mathphys-num]{sn-jnl}% Math and Physical Sciences Numbered Reference Style
\usepackage{graphicx}%
\usepackage{multirow}%
\usepackage{amsmath,amssymb,amsfonts}%
\usepackage{amsthm}%
\usepackage{mathrsfs}%
\usepackage[title]{appendix}%
\usepackage{xcolor}%
\usepackage{textcomp}%
\usepackage{manyfoot}%
\usepackage{booktabs}%
\usepackage{algorithm}%
\usepackage{algorithmicx}%
\usepackage{algpseudocode}%
\usepackage{listings}%
\usepackage{tabularx}
\AtBeginDocument{
  \setlength{\abovedisplayskip}{8pt plus 2pt minus 2pt}
  \setlength{\belowdisplayskip}{8pt plus 2pt minus 2pt}
  \setlength{\abovedisplayshortskip}{8pt plus 2pt minus 2pt}
  \setlength{\belowdisplayshortskip}{8pt plus 2pt minus 2pt}}
\theoremstyle{thmstyleone}%
\newtheorem{theorem}{Theorem}%  meant for continuous numbers
\newtheorem{corollary}{Corollary}
\newtheorem{lemma}[theorem]{Lemma}

\theoremstyle{thmstyletwo}%
\newtheorem{remark}{Remark}%

\theoremstyle{thmstylethree}%
\newtheorem{definition}{Definition}%

\begin{document}

\title[Stability and Error Estimates for Convolution Quadrature under Radial Symbol Bounds]{Stability and Error Estimates for Convolution Quadrature under Radial Symbol Bounds}

%%=============================================================%%
%% GivenName	-> \fnm{Joergen W.}
%% Particle	-> \spfx{van der} -> surname prefix
%% FamilyName	-> \sur{Ploeg}
%% Suffix	-> \sfx{IV}
%% \author*[1,2]{\fnm{Joergen W.} \spfx{van der} \sur{Ploeg}
%%  \sfx{IV}}\email{iauthor@gmail.com}
%%=============================================================%%

\author[1]{\fnm{Jingchuang} \sur{Ren}}\email{ren\_math@mail.tsinghua.edu.cn}
\equalcont{These authors contributed equally to this work.}

\author*[1]{\fnm{Haibin} \sur{Su}}\email{hbsu@mail.tsinghua.edu.cn}
\equalcont{These authors contributed equally to this work.}

\affil[1]{\orgdiv{Yau Mathematical Sciences Center}, \orgname{Tsinghua University}, \orgaddress{\city{Beijing}, \postcode{100084}, \country{China}}}

%%==================================%%
%% Sample for unstructured abstract %%
%%==================================%%

\abstract{
We develop stability and error estimates for multistep convolution
quadrature (CQ) under general radial bounds on the Laplace symbol.
Instead of the classical single-power condition $|F(z)|\leq M|z|^{-\mu}$, we assume $|F(z)|\leq C\Phi(|z|)$, where $\Phi$ is positive, continuous and regularly varying at zero and infinity. For strongly $A(\alpha)$-stable multistep methods, we derive error estimates and stability estimates for the CQ weights, as well as error bounds for power-type inputs $g_\beta(t)=t^{\beta-1}/\Gamma(\beta)$ with $\beta>0$. These estimates explicitly retain the dependence on the memory law through the factor $\Phi(t_n^{-1})$. The framework accommodates symbols whose behavior cannot be described by a global single-power. It also preserves logarithmic and multiscale factors that would be lost under a coarser power bounds. Then we apply the to fractional viscoelastic, distributed-order, and logarithmic-power memory models. Numerical experiments with backward Euler and second-order backward differentiation CQ confirm the expected temporal convergence orders for direct responses. A
distributed-order PDE further illustrates temporal convergence for smooth and nonsmooth initial data.
}

\keywords{convolution quadrature, error estimates, regular variation, logarithmic-power memory, distributed-order operators}

%%\pacs[JEL Classification]{D8, H51}

\pacs[MSC Classification]{44A55, 65D30, 65R20}

\maketitle

\section{Introduction}\label{sec:introduction}

Convolution quadrature (CQ) provides a systematic way to approximate
fractional convolutions from their Laplace transforms. For a kernel
$f$ with Laplace transform $F$, the continuous operator takes
the form
\[
F(\partial_t)g(t)
:=
\int_0^t f(t-s)g(s)\,\mathrm ds.
\]
On the uniform grid $t_n=n\tau$ with step size $\tau$, a multistep CQ is defined by
\[
F(\partial_\tau)g(t_n)
=
\sum_{j=0}^n \omega_j g(t_{n-j}),
\qquad
\sum_{j=0}^\infty \omega_j\xi^j
=
F\!\left(\frac{\delta(\xi)}{\tau}\right),
\]
where $\omega_j$ are the CQ weights and $\delta$ is the generating function of a linear multistep method. The construction goes back to Lubich's discretized fractional calculus and operational calculus, and was subsequently developed for general sectorial Laplace transforms and, more generally, for half-plane settings. Its main advantages include the treatment of singular kernels, kernels with several time scales, and problems for which the Laplace transform is available more readily than the time-domain kernel; see, e.g., \cite{lubich1986discretized, lubich1988convolution, lubich1988convolutionII, lubich1992convolution, lubich2004convolution}.

An important condition in the classical sectorial analysis is a
polynomial bound on the Laplace symbol. In the notation of
\cite{lubich2004convolution}, one assumes analyticity in a shifted sector
together with
\[
|F(z)|\le M|z|^{-\mu}
\]
for some $\mu\in\mathbb R$. Combined with the stability and consistency properties of the underlying multistep method, such a bound yields estimates for the CQ weights and for the CQ approximation of power-type inputs.

For some memory symbols, however, a single power does not accurately
represent the radial behavior of the complete Laplace symbol. Typical
examples include
\[
c_1z^{-a}+c_2z^{-b},
\qquad
\int_0^1 z^{-q}\,\mathrm dq,
\qquad
-z^{-a}\log z,
\]
with $0<a<b<1$. These symbols arise, respectively, in fractional
viscoelasticity \cite{adolfsson2004adaptive, lopez2008adaptive, bonfanti2020fractional, makris2020time},
distributed-order models \cite{jin2016error,yin2021approximation},
and fractional-order sensitivity analysis
\cite{kharazmi2019fractional}. Related logarithmic-power kernels are
studied in \cite{singh2024weakly}. Their behavior may involve different power scales at zero and infinity or additional slowly varying factors. Although individual examples can often be treated by shifted-sector arguments, decomposition into power symbols, or problem-specific estimates, such treatments do not in general retain the full radial dependence of the complete symbol.

The aim of this paper is to establish stability and error estimates for multistep convolution quadrature directly under radial symbol bounds of the form
\[
|F(z)|\le C\Phi(|z|),
\]
where $\Phi:(0,\infty)\to(0,\infty)$ is continuous and regularly varying at both zero and infinity, possibly with different indices. These regular variation assumptions yield a global Potter estimate. This enables a unified contour integral analysis that preserves the radial scale in both short- and long-time error estimates.

Let $f_n=\omega_n/\tau$ be the scaled CQ weight. For a
strongly $A(\alpha)$-stable multistep method of order $p$, we prove
\[
|f_n-f(t_n)|\le C\tau^pt_n^{-p-1}\Phi(t_n^{-1}),
\qquad
|f_n|+|f(t_n)|\le Ct_n^{-1}\Phi(t_n^{-1}).
\]
These estimates hold for every $t_n=n\tau>0$, with constants
independent of $\tau$, $n$ and a terminal time. For the power-type input $g_\beta(t)=t^{\beta-1}/\Gamma(\beta)$, $\beta>0$, we further obtain
\[
\|F(\partial_\tau)g_\beta(t_n)-F(\partial_t)g_\beta(t_n)\|
\le C\tau^{\min\{p,\beta\}}
t_n^{\beta-\min\{p,\beta\}-1}\Phi(t_n^{-1}).
\]
These estimates extend the corresponding power-bound results while retaining the radial scale $\Phi(t_n^{-1})$ explicitly.

We illustrate the framework with fractional Maxwell and Kelvin–Voigt models, distributed-order memory, and a logarithmic-power symbol obtained by differentiating with respect to the fractional order. The radial bounds retain the contributions of multiple fractional orders and the logarithmic factors present in these examples. Numerical experiments with backward Euler and the second-order backward differentiation formula confirm the predicted temporal orders and short-time scales, including for a distributed-order PDE with smooth and nonsmooth initial data.

The remainder of the paper is organized as follows. Section~\ref{sec:CQ-preliminaries} recalls the multistep CQ construction and the operational identities used in the analysis. Section~\ref{sec:regularly-varying-majorants} establishes the kernel and power-type input estimates under the assumption of regular variation at both endpoints. Section~\ref{sec:applications} presents the applications to fractional viscoelasticity and memory operators, Section~\ref{sec:numerical-experiments} shows the numerical experiments, and Section~\ref{sec:conclusion} concludes the paper.

\section{Preliminaries}
\label{sec:CQ-preliminaries}

We recall the construction of multistep CQ and the associated operational viewpoint introduced by Lubich \cite{lubich1988convolution,lubich1988convolutionII,
lubich2004convolution}. The derivation shows that CQ can be obtained
by applying a linear multistep method to a family of parameter-dependent
ordinary differential equations arising from the Laplace representation
of the convolution. This viewpoint identifies the two main ingredients
in the CQ analysis: the consistency and stability properties of the
underlying multistep method, and suitable bounds on the Laplace symbol
at the complex frequencies generated by the discretization.

\subsection{Construction and stability of multistep CQ}
\label{subsec:CQ-construction}

Let $X$ be a complex Banach space and let $\mathcal{L}(X)$ denote
the bounded linear operators on $X$. We set
\[
\Sigma_{\pi-\theta}
=
\{z\in\mathbb C\setminus\{0\}:|\arg z|<\pi-\theta\},
\qquad
0<\theta<\frac{\pi}{2},
\]
and use the principal branch for complex powers and logarithms.

Let $f$ be a kernel with Laplace transform $F$. For an input $g$, the continuous response is written as
\begin{equation}
\label{eq:CQ-continuous}
F(\partial_t)g(t)
:=
(f*g)(t)
=
\int_0^t f(t-s)g(s)\,\mathrm ds.
\end{equation}
To explain the construction of CQ, assume temporarily that $F$ is
analytic in $\Sigma_{\pi-\theta}$ and satisfies the classical decay
condition
\[
|F(z)|\le M|z|^{-\mu},
\qquad
\mu>0,
\]
so that its inverse Laplace transform is locally integrable.
Using Laplace inversion and interchanging the order of integration,
\eqref{eq:CQ-continuous} can be represented as
\begin{equation}
\label{eq:CQ-contour}
F(\partial_t)g(t)
=
\frac{1}{2\pi\mathrm i}
\int_\Gamma F(\lambda)y(t,\lambda)\,\mathrm d\lambda,
\end{equation}
where
\[
y(t,\lambda)
=
\int_0^t e^{\lambda(t-s)}g(s)\,\mathrm ds
\]
solves
\begin{equation}
\label{eq:CQ-parametric-ode}
\partial_t y
=
\lambda y+g,
\qquad
y(0,\lambda)=0.
\end{equation}
Thus the convolution is represented as a superposition of
parameter-dependent evolution problems in the Laplace domain.

Let $t_n=n\tau$ and apply a $k$-step linear multistep method to
\eqref{eq:CQ-parametric-ode}. Set
\[
\delta(\xi)
=
\frac{\sum_{j=0}^k a_j\xi^j}
     {\sum_{j=0}^k b_j\xi^j},
\]
and
\[
Y(\xi,\lambda)=\sum_{n=0}^{\infty}y_n(\lambda)\xi^n,
\qquad G(\xi)=\sum_{n=0}^{\infty}g(t_n)\xi^n,
\]
the multistep scheme yields
\[
Y(\xi,\lambda)
=
\left(
\frac{\delta(\xi)}{\tau}-\lambda
\right)^{-1}
G(\xi).
\]
Substitution into \eqref{eq:CQ-contour} and Cauchy's formula give
\[
\frac{1}{2\pi\mathrm i}
\int_\Gamma
F(\lambda)
\left(
\frac{\delta(\xi)}{\tau}-\lambda
\right)^{-1}
\,\mathrm d\lambda
=
F\left(\frac{\delta(\xi)}{\tau}\right).
\]
This leads to the CQ weights
\begin{equation}
\label{eq:CQ-weights}
F\left(\frac{\delta(\xi)}{\tau}\right)
=
\sum_{j=0}^{\infty}\omega_j\xi^j,
\end{equation}
and hence to
\begin{equation}
\label{eq:CQ-discrete}
F(\partial_\tau)g(t_n)
=
\sum_{j=0}^n
\omega_j g(t_{n-j}).
\end{equation}
Accordingly, CQ can be interpreted as a discretized operational
calculus in which the Laplace variable $z$ is replaced by the discrete
differentiation symbol $\delta(\xi)/\tau$.

We assume throughout that the multistep method is strongly
$A(\alpha)$-stable and of order $p$ in the sense of
\cite[Eq.~(2.5)]{lubich2004convolution}, with $\alpha>\theta$.
In particular,
\begin{equation}
\label{eq:CQ-multistep-condition}
|\arg\delta(\xi)|
\le \pi-\alpha,
\qquad |\xi|<1,
\end{equation}
and
\begin{equation}
\label{eq:CQ-consistency}
\frac{\delta(e^{-z\tau})}{\tau}
=
z+O\!\left(\tau^p|z|^{p+1}\right),
\qquad |z\tau|\to0.
\end{equation}
The first condition keeps the discrete Laplace frequencies inside the
analytic sector of $F$, while the second provides the local
approximation of the continuous Laplace variable.

A fundamental property of CQ is the exact discrete composition rule
\begin{equation}
\label{eq:CQ-composition}
F_2(\partial_\tau)F_1(\partial_\tau)g
=
(F_2F_1)(\partial_\tau)g,
\end{equation}
which follows directly from multiplication of the corresponding
generating functions. This is the discrete counterpart of the
associativity of continuous convolution and is central to the
stability of convolution equations.

The construction has three features that are particularly relevant
for memory problems. First, only the Laplace symbol $F$ is required,
so an explicit representation of the time-domain kernel is unnecessary. Second, singular kernels and kernels containing several time scales are treated through the same generating-function formula. Third, the exact operational identity \eqref{eq:CQ-composition} preserves the algebraic structure of convolution and integro-differential equations. These properties make CQ well suited to fractional and hereditary models, as well as to spatially discretized evolution problems whose transfer operators are naturally available in the Laplace domain.

\subsection{Regular variation}
\label{subsec:regular-variation}

We recall several standard facts about regularly varying functions
that will be used in the analysis below; see
\cite{seneta1976regularly,bingham1987regular}.

\begin{definition}
\label{def:regular-variation}
A measurable function
$\Phi:(0,\infty)\to(0,\infty)$ is said to be regularly varying at
infinity with index $\ell\in\mathbb{R}$ if
\[
\lim_{r\to\infty}
\frac{\Phi(\lambda r)}{\Phi(r)}
=
\lambda^\ell
\]
for every fixed $\lambda>0$. In this case, we write
$\Phi\in RV_\ell$. In particular, $\Phi\in RV_0$ is called slowly
varying. Equivalently, every $\Phi\in RV_\ell$ can be written as
\[
\Phi(r)=r^\ell L(r),
\]
where $L\in RV_0$.
\end{definition}

\begin{definition}\label{def:two-endpoint-regular-variation}
A measurable function $\Phi:(0,\infty)\to(0,\infty)$ is regularly
varying with indices $\rho$ at zero and $\ell$ at infinity, if, for every fixed $\lambda>0$,
\[
\lim_{r\to0}\frac{\Phi(\lambda r)}{\Phi(r)}=\lambda^\rho,
\qquad
\lim_{r\to\infty}\frac{\Phi(\lambda r)}{\Phi(r)}=\lambda^\ell.
\]
The indices $\rho,\ell\in\mathbb R$ may differ.
\end{definition}

Regular variation of $\Phi$ at zero with index
$\rho$ is equivalent to regular variation of
$\Psi(r)=\Phi(r^{-1})$ at infinity with index
$-\rho$, since
\[
\lim_{r\to\infty}
\frac{\Psi(\lambda r)}{\Psi(r)}
=
\lim_{s\to0^+}
\frac{\Phi(\lambda^{-1}s)}{\Phi(s)}
=\lambda^{-\rho}.
\]
Hence the Potter bound at infinity applied to
$\Psi$ yields the corresponding estimate for
$\Phi$ near zero.

\begin{lemma}[Potter bound \cite{bingham1987regular}]
\label{lem:potter}
Let $\Phi\in RV_\ell$. Then, for every $\varepsilon>0$ and
$A>1$, there exists $r_0>0$ such that
\[
\frac{\Phi(y)}{\Phi(x)}
\le
A
\max\left\{
\left(\frac{y}{x}\right)^{\ell+\varepsilon},
\left(\frac{y}{x}\right)^{\ell-\varepsilon}
\right\},
\qquad x,y\ge r_0.
\]
\end{lemma}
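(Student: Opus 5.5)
The plan is to go through the Uniform Convergence Theorem (UCT) for slowly varying functions and the Karamata representation. By Definition~\ref{def:regular-variation} I write $\Phi(r)=r^\ell L(r)$ with $L\in RV_0$. Then
\[
\frac{\Phi(y)}{\Phi(x)}=\left(\frac{y}{x}\right)^{\ell}\frac{L(y)}{L(x)},
\]
so it is enough to show the following. For every $\varepsilon>0$ and $A>1$ there is $r_0$ such that
\[
\frac{L(y)}{L(x)}\le A\max\{(y/x)^{\varepsilon},(y/x)^{-\varepsilon}\}, \qquad x,y\ge r_0.
\]
Multiplying by $(y/x)^\ell$ then gives the stated bound, since $(y/x)^\ell\max\{(y/x)^{\varepsilon},(y/x)^{-\varepsilon}\}$ is exactly the maximum in the statement.

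\textbf{Step 1: logarithmic form.} Set $h(u)=\log L(e^u)$, which is measurable. Slow variation means $h(u+s)-h(u)\to0$ as $u\to\infty$ for each fixed $s$.

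\textbf{Step 2: uniform convergence (the main obstacle).} I must show this convergence is uniform for $s$ in compact sets. This is the step where measurability is essential. I would prove it by the Egorov/Steinhaus measure argument. Suppose uniformity fails on $[0,1]$. Then there are $u_n\to\infty$, $s_n\in[0,1]$ and $\eta>0$ with $|h(u_n+s_n)-h(u_n)|\ge\eta$. Consider the sets
\[
E_n=\{s\in[0,2]:|h(u_n+s)-h(u_n)|\ge\eta/2\}
\]
and
\[
F_n=\{s\in[0,2]:|h(u_n+s)-h(u_n+s_n)|\ge\eta/2\}.
\]
Pointwise convergence together with dominated convergence applied to indicator functions gives $|E_n|\to0$. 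After the substitution $s\mapsto s-s_n$, the same argument gives $|F_n|\to0$. Every $s$ satisfies $|h(u_n+s)-h(u_n)|\ge\eta/2$ or $|h(u_n+s)-h(u_n+s_n)|\ge\eta/2$, because the two differences add up to at least $\eta$ in absolute value. So $E_n\cup F_n$ covers a set of measure at least $1$, which contradicts $|E_n|+|F_n|\to0$. Hence $\sup_{s\in[0,1]}|h(u+s)-h(u)|\to0$.

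\textbf{Step 3: chaining.} Fix $\varepsilon$ and $A$. By Step 2 choose $u_0$ such that $|h(u+s)-h(u)|\le\min\{\varepsilon,\log A\}/2$ for all $u\ge u_0$ and $s\in[0,1]$. Take $u_0\le u\le v$ and write $v-u=m+\sigma$ with $m\in\mathbb{N}_0$ and $\sigma\in[0,1)$. Telescoping over unit steps and one final partial step gives
\[
|h(v)-h(u)|\le \tfrac{\varepsilon}{2}m+\tfrac{\log A}{2}\le \varepsilon(v-u)+\log A.
\]
The partial step is charged to $\log A$. For the unit steps I use $m\le v-u$ and the bound $\varepsilon/2\le\varepsilon$.

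\textbf{Step 4: conclusion.} Exponentiate with $u=\log\min(x,y)$ and $v=\log\max(x,y)$. This yields
\[
L(y)/L(x)\le A\max\{y/x,x/y\}^{\varepsilon}
\]
for $x,y\ge r_0:=e^{u_0}$, which is exactly the reduced claim. Applying the result to $\Psi(r)=\Phi(r^{-1})$, as in the remark preceding the lemma, gives the corresponding bound near zero.
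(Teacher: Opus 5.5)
Your proof is correct. The paper itself gives no argument for this lemma; it simply quotes Potter's theorem from Bingham--Goldie--Teugels. The textbook route behind that citation is the uniform convergence theorem followed by the Karamata representation $L(x)=c(x)\exp\bigl(\int_a^x\epsilon(u)\,u^{-1}\,\mathrm du\bigr)$, with $c(x)\to c\in(0,\infty)$ and $\epsilon(u)\to0$. The bound is then read off by taking $c(y)/c(x)\le A$ and $|\epsilon|\le\varepsilon$ beyond some point.

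Although your plan announces the Karamata representation, Steps 1--4 never use it, so you can delete that mention. The unit-step telescoping in Step 3 converts the uniform smallness of $h(u+s)-h(u)$ for $s\in[0,1]$ directly into $|h(v)-h(u)|\le\varepsilon(v-u)+\log A$.

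Your route is more economical and more elementary. It needs only measurability, dominated convergence along a single sequence, and translation invariance of Lebesgue measure. It never requires the local boundedness and integrability of $h$ that the representation theorem must first extract from the uniform convergence theorem before its integrals make sense. The cited route, in exchange, gives more structure at once: the representation itself, and global variants of Potter's bound under local boundedness, in the spirit of Lemma~\ref{lem:potter-global}.

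One detail to write out in Step 2. After substituting $t=s-s_n$, the set $F_n-s_n$ lies in $[-s_n,2-s_n]$, which moves with $n$. Dominate by the indicator of the fixed interval $[-1,2]$. Then use the pointwise convergence $h(w_n+t)-h(w_n)\to0$, with $w_n=u_n+s_n$, also for negative $t$. This holds because $L(\lambda r)/L(r)\to1$ for every $\lambda>0$, including $\lambda=e^t<1$.
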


For estimates valid at all positive times, we need a comparison
on the whole positive axis.

\begin{lemma}\label{lem:potter-global}
Let $\Phi:(0,\infty)\to(0,\infty)$ be continuous
and regularly varying at zero and infinity with
indices $\rho$ and $\ell$, respectively.
For every $\varepsilon>0$, there exists a constant
$C>0$ such that
\begin{equation}\label{eq:global-potter}
\frac{\Phi(y)}{\Phi(x)}
\le C\max\left\{
\left(\frac yx\right)^{\min\{\rho,\ell\}-\varepsilon},
\left(\frac yx\right)^{\max\{\rho,\ell\}+\varepsilon}
\right\},
\qquad x,y>0.
\end{equation}
The constant $C$ is independent of $x$ and $y$.
\end{lemma}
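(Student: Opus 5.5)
Set $a=\min\{\rho,\ell\}-\varepsilon$, $b=\max\{\rho,\ell\}+\varepsilon$, and write the target bound as $\Phi(y)/\Phi(x)\le C\,m(y/x)$ with $m(s)=\max\{s^a,s^b\}$. Since $a<b$, we have $m(s)=s^b$ for $s\ge1$ and $m(s)=s^a$ for $s\le1$. The function $m$ is submultiplicative: $m(st)\le m(s)m(t)$ for all $s,t>0$. This can be checked case by case on the signs of $\log s$ and $\log t$. For example, if $s\ge1\ge t$ and $st\ge1$, then $(st)^b=s^bt^b\le s^bt^a$ because $t\le1$ and $b>a$. This property lets us chain comparisons through intermediate points.

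We split $(0,\infty)$ into three pieces. Using Lemma~\ref{lem:potter} with $A=2$ at infinity, and the same lemma applied to $\Psi(r)=\Phi(1/r)$ near zero, we choose $0<r_1<1<r_2$ so that the following hold:
\[
\frac{\Phi(y)}{\Phi(x)}\le 2\max\{(y/x)^{\ell+\varepsilon},(y/x)^{\ell-\varepsilon}\}\le 2m(y/x),\qquad x,y\ge r_2,
\]
and similarly $\Phi(y)/\Phi(x)\le 2m(y/x)$ for $x,y\le r_1$. The zero-end version follows because $\Psi\in RV_{-\rho}$, so $\Phi(y)/\Phi(x)=\Psi(1/y)/\Psi(1/x)\le 2\max\{(x/y)^{-\rho\pm\varepsilon}\}=2\max\{(y/x)^{\rho\mp\varepsilon}\}$. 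In each case the exponents lie in $[a,b]$, and $\max\{s^c,s^d\}\le m(s)$ whenever $a\le c\le d\le b$.

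On the compact interval $K=[r_1,r_2]$, continuity and positivity give $0<\min_K\Phi\le\max_K\Phi<\infty$. Since $y/x\in[r_1/r_2,r_2/r_1]$ and $m$ is bounded below by a positive constant there, we get $\Phi(y)/\Phi(x)\le C_K m(y/x)$ for all $x,y\in K$.

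Now take arbitrary $x,y$ in different regions. We pass through the endpoints: for $x\le r_1$ and $y\ge r_2$, write
\[
\frac{\Phi(y)}{\Phi(x)}=\frac{\Phi(y)}{\Phi(r_2)}\cdot\frac{\Phi(r_2)}{\Phi(r_1)}\cdot\frac{\Phi(r_1)}{\Phi(x)}\le 2m(y/r_2)\cdot C_K m(r_2/r_1)\cdot 2m(r_1/x),
\]
and submultiplicativity gives at most $4C_K\,m(y/x)$. The same chaining handles every other pair of regions, with at most two intermediate points from $\{r_1,r_2\}$, each lying in both adjacent closed regions. Taking $C$ as the maximum of the finitely many resulting constants finishes the proof.

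The main point to get right is that chaining works for the global exponents. The point $x$ may lie near zero while $y$ lies near infinity, so the local Potter exponents differ between the two ends. Submultiplicativity of $m$ built from the extreme exponents $a,b$ is exactly what absorbs this mismatch. Continuity on the compact middle interval is used only for the middle factor.
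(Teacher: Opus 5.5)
Your three-region argument (Potter bounds at each end via $\Psi(r)=\Phi(1/r)$, compactness on $[r_1,r_2]$, chaining through the separating endpoints) has the same structure as the paper's proof. However, the step that combines the factors rests on an inequality pointing the wrong way. Submultiplicativity, $m(st)\le m(s)m(t)$, gives
\[
m(y/x)\le m(y/r_2)\,m(r_2/r_1)\,m(r_1/x).
\]
This is a \emph{lower} bound for the product you obtained. You need the upper bound $m(y/r_2)\,m(r_2/r_1)\,m(r_1/x)\le C\,m(y/x)$. The reverse (supermultiplicative) inequality is false in general: for $s>1$ one has $m(s)m(1/s)=s^{b-a}\to\infty$ as $s\to\infty$, while $m(1)=1$. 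So your closing claim that submultiplicativity is what ``absorbs this mismatch'' is not correct. Chaining through a point lying outside the interval between $x$ and $y$ would lose control of the bound.

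What actually makes the chaining work is that the intermediate points always lie between $x$ and $y$. For $x\le r_1<r_2\le y$, the three ratios $y/r_2$, $r_2/r_1$, $r_1/x$ are all $\ge1$. There $m(s)=s^b$ is a pure power, so the product equals $(y/x)^b=m(y/x)$ exactly. For $y\le r_1<r_2\le x$, all ratios are $\le1$, and $m(s)=s^a$ again gives equality. The adjacent-region cases are identical with one intermediate point.

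With this one-line replacement, your argument is correct and is essentially the paper's proof. The paper restricts to ordered pairs $x\le y$, so every ratio in the chain is $\ge1$ and the exponent $\max\{\rho,\ell\}+\varepsilon$ multiplies through. It keeps two-sided bounds in each region and handles $y<x$ by taking reciprocals of the lower bound. That ordering is exactly the monotonicity of the chain which your write-up uses implicitly but attributes to the wrong property. Using $m$ to cover both orderings at once is only a difference in packaging.
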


\begin{proof}
Since $\Phi$ is regularly varying at zero with
index $\rho$, the function
$\Psi(r)=\Phi(r^{-1})$ is regularly varying
at infinity with index $-\rho$.
By Lemma~\ref{lem:potter}, for sufficiently
small $a>0$ and $0<x\le y\le a$,
\[
\begin{aligned}
\frac{\Phi(y)}{\Phi(x)}
&=\frac{\Psi(y^{-1})}{\Psi(x^{-1})}\\
&\le C\max\left\{
\left(\frac{x}{y}\right)^{-\rho-\varepsilon},
\left(\frac{x}{y}\right)^{-\rho+\varepsilon}
\right\}\\
&=C\left(\frac{y}{x}\right)^{\rho+\varepsilon}.
\end{aligned}
\]
Applying the same Potter bound to the
reciprocal ratio gives
\[
\frac{\Phi(x)}{\Phi(y)}
\le C\left(\frac{x}{y}\right)^{\rho-\varepsilon}.
\]
Consequently,
\[
C^{-1}\left(\frac yx\right)^{\rho-\varepsilon}
\le\frac{\Phi(y)}{\Phi(x)}
\le C\left(\frac yx\right)^{\rho+\varepsilon},
\qquad 0<x\le y\le a.
\]

Similarly, regular variation at infinity
with index $\ell$ and Lemma~\ref{lem:potter}
yield, for sufficiently large $b>a$,
\[
C^{-1}\left(\frac yx\right)^{\ell-\varepsilon}
\le\frac{\Phi(y)}{\Phi(x)}
\le C\left(\frac yx\right)^{\ell+\varepsilon},
\qquad b\le x\le y.
\]

Since $\Phi$ is positive and continuous on
$[a,b]$, both $\Phi(y)/\Phi(x)$ and its
reciprocal are uniformly bounded for
$a\le x\le y\le b$.
Moreover, $1\le y/x\le b/a$, so the
corresponding power bounds also hold
on this compact interval.
Combining these estimates, we obtain
\begin{equation}\label{eq:global-potter-ordered}
C^{-1}\left(\frac yx\right)^{\min\{\rho,\ell\}-\varepsilon}
\le\frac{\Phi(y)}{\Phi(x)}
\le C\left(\frac yx\right)^{\max\{\rho,\ell\}+\varepsilon}
\end{equation}
whenever $x\le y$ belong to the one of the three intervals
$(0,a]$, $[a,b]$, and $[b,\infty)$.

For $x\le y$ in different intervals, we split the ratio at the intermediate endpoints. Thus \eqref{eq:global-potter-ordered}
holds for all $0<x\le y$.

Finally, if $y<x$, applying the lower bound
in \eqref{eq:global-potter-ordered} to
$\Phi(x)/\Phi(y)$ and taking reciprocals yields
\[
\frac{\Phi(y)}{\Phi(x)}
\le C\left(\frac yx\right)^{\min\{\rho,\ell\}-\varepsilon}.
\]
Together with the upper bound for $x\le y$,
this proves \eqref{eq:global-potter}.
\end{proof}

The global estimate also implies a useful
scaling bound. Indeed, for any
$\nu>\max\{\rho,\ell,0\}$, choose
$\varepsilon>0$ sufficiently small such that
$\max\{\rho,\ell\}+\varepsilon<\nu$.
Then, by \eqref{eq:global-potter}, for
$r>0$ and $\eta\ge1$,
\[
\begin{aligned}
\frac{\Phi(\eta r)}{\Phi(r)}
&\le C\max\left\{
\eta^{\min\{\rho,\ell\}-\varepsilon},
\eta^{\max\{\rho,\ell\}+\varepsilon}
\right\}\\
&=C\eta^{\max\{\rho,\ell\}+\varepsilon}
\le C\eta^\nu.
\end{aligned}
\]
Consequently,
\begin{equation}\label{eq:RV-rescaling}
\Phi(\eta r)\le C\eta^\nu\Phi(r),
\qquad r>0,\quad \eta\ge1,
\end{equation}
where $C$ is independent of $r$ and $\eta$.

\section{Convolution quadrature estimates under regularly varying radial bounds}
\label{sec:regularly-varying-majorants}

We impose the following assumption on the radial bound $\Phi$.

\textbf{Assumption.}\label{ass:two-endpoint-radial}
The operator-valued function $F:\Sigma_{\pi-\theta}\to\mathcal L(X)$
is analytic and satisfies
\[
|F(z)|\le C\Phi(|z|),\qquad z\in\Sigma_{\pi-\theta},
\]
where $\Phi:(0,\infty)\to(0,\infty)$ is positive, continuous, and regularly varying at zero and infinity with indices $\rho,\ell\in\mathbb R$, respectively. Under this assumption, we obtain the following approximation and stability estimates.

\begin{theorem}\label{thm:CQ-kernel}
Let Assumption~1 hold, and assume that the underlying linear multistep method is strongly $A(\alpha)$-stable and has order $p$, with $\alpha>\theta$. Then, for every $\tau>0$ and $n\ge1$,
\begin{align}
\|f_n-f(t_n)\|
&\le C\tau^pt_n^{-p-1}\Phi(t_n^{-1}),
\label{eq:CQ-kernel-error}\\
\|f_n\|+\|f(t_n)\|
&\le Ct_n^{-1}\Phi(t_n^{-1}),
\label{eq:CQ-kernel-stability}
\end{align}
where $t_n=n\tau$ and the constant $C$ is independent of $\tau$ and $n$.
\end{theorem}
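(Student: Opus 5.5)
We follow Lubich's contour-integral proof of the kernel estimates in \cite{lubich2004convolution}, replacing the power $|z|^{-\mu}$ by $\Phi(|z|)$ throughout. The Potter estimate \eqref{eq:global-potter} and the rescaling bound \eqref{eq:RV-rescaling} let us pull $\Phi(t_n^{-1})$ out of every contour integral. Because $\Phi$ may grow at infinity, the inverse Laplace integral need not converge absolutely. We therefore first reduce to a decaying symbol: pick an integer $m$ with $m>\max\{\rho,\ell\}+1$ and set $K(z)=z^{-m}F(z)$. Then $|K(z)|\le C|z|^{-m}\Phi(|z|)$, which decays faster than $|z|^{-1-\epsilon}$ at infinity, and $f=\partial_t^m k$ in the distributional sense, where $k$ is the inverse transform of $K$. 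Instead of carrying this differentiation along, one may also work directly with contours adapted to $t$ and interpret the resulting integrals as in Lubich's setting. We opt for the second route, since it is cleaner.

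Fix $t=t_n$ and use the standard contour $\Gamma_t$: two rays $\{re^{\pm i(\pi-\theta')}: r\ge 1/t\}$ with $\theta<\theta'<\alpha$, joined by the arc $|z|=1/t$. Then
\[
f(t)=\frac{1}{2\pi\mathrm i}\int_{\Gamma_t}F(z)e^{zt}\,\mathrm dz .
\]
Substituting $z=w/t$ and using \eqref{eq:global-potter} in the form $\Phi(|w|/t)\le C\Phi(1/t)\max\{|w|^{a},|w|^{b}\}$, with $a=\min\{\rho,\ell\}-\varepsilon$ and $b=\max\{\rho,\ell\}+\varepsilon$, gives
\[
\|f(t)\|\le Ct^{-1}\Phi(t^{-1})\int_{\Gamma_1}\max\{|w|^a,|w|^b\}\,e^{\operatorname{Re}w}\,|\mathrm dw|\le Ct^{-1}\Phi(t^{-1}),
\]
because the exponential factor makes the integral over the rays converge for any powers. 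The same representation holds for the weights:
\[
f_n=\frac{1}{\tau}\cdot\frac{1}{2\pi\mathrm i}\oint_{|\xi|=\varrho}F\!\left(\frac{\delta(\xi)}{\tau}\right)\xi^{-n-1}\,\mathrm d\xi .
\]
The change of variables $\xi=e^{-z\tau}$ transforms this into
\[
f_n=\frac{1}{2\pi\mathrm i}\int_{\Gamma^\tau}F\!\left(\frac{\delta(e^{-z\tau})}{\tau}\right)e^{zt_n}\,\mathrm dz,
\]
where $\Gamma^\tau$ is the part of $\Gamma_{t_n}$ with $|\operatorname{Im}z|\le\pi/\tau$, closed by a segment. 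Following Lubich, we deform the contour so that it stays in the region where \eqref{eq:CQ-multistep-condition} keeps $\delta(e^{-z\tau})/\tau$ inside $\Sigma_{\pi-\theta}$. There $|\delta(e^{-z\tau})/\tau|$ is comparable to $|z|$, since strong $A(\alpha)$-stability gives $c|z|\le|\delta(e^{-z\tau})|/\tau\le C|z|$. The rescaling bound \eqref{eq:RV-rescaling}, together with its lower counterpart obtained from the Potter bound, then gives
\[
\Phi\!\left(\frac{|\delta(e^{-z\tau})|}{\tau}\right)\le C\,\Phi(|z|).
\]
The bound for $\|f_n\|$ now follows exactly as for $f(t_n)$.

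For the error, we compare the two integrands on $\Gamma^\tau$ and treat the tail $|\operatorname{Im} z|>\pi/\tau$ of the continuous integral separately. On $\Gamma^\tau$ we write
\[
F\!\left(\frac{\delta(e^{-z\tau})}{\tau}\right)-F(z)=\int_0^1 F'\bigl(z+s(\delta(e^{-z\tau})/\tau-z)\bigr)\,\mathrm ds\,\Bigl(\frac{\delta(e^{-z\tau})}{\tau}-z\Bigr).
\]
We bound $F'$ by Cauchy's estimate on a disc of radius proportional to $|z|$ inside the sector, which gives $\|F'(\zeta)\|\le C|z|^{-1}\Phi(|z|)$, again using the Potter comparability of $\Phi$ over a bounded ratio of arguments. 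Combining this with the consistency relation \eqref{eq:CQ-consistency}, valid for $|z\tau|$ small, yields an integrand bound $C\tau^p|z|^p\Phi(|z|)$. For $|z\tau|$ bounded but not small, the same bound holds trivially because $\tau^p|z|^p$ is bounded below there. Scaling $z=w/t_n$ then gives the contribution $C\tau^p t_n^{-p-1}\Phi(t_n^{-1})$. The tail of $f(t_n)$ for $|z|\gtrsim 1/\tau$, together with the closing segment, is controlled by $e^{-c|z|t_n}$. Since $n\ge1$ means $t_n\ge\tau$, we may insert the factor $(\tau|z|)^p\ge c$ in the tail and reduce it to the same form.

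The main obstacle is making these comparisons uniform with respect to $\Phi$. We need two-sided Potter bounds whenever $|\delta/\tau|$ and $|z|$ (or the Cauchy disc radius and $|z|$) are comparable, and the growth exponents $a,b$ must be absorbed by the exponential decay of $e^{\operatorname{Re}w}$. The key point is that \eqref{eq:global-potter} holds with a constant independent of $x,y$. This is exactly what makes the final constant independent of $n$, $\tau$ and the terminal time, including in the short-time regime $t_n\ll1$, where $\Phi(t_n^{-1})$ reflects the index $\ell$, and the long-time regime, where it reflects $\rho$.
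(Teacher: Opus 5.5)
Your route is genuinely different from the paper's. The paper keeps Lubich's representation $f_n=\frac1{2\pi\mathrm i}\int_\Gamma F(\lambda)e_n(\tau\lambda)\,\mathrm d\lambda$. That representation forces the regularization $z^{-k}F(z)$ when $\Phi$ grows, and the paper then imports the ready-made bounds on $e_n(z)-e^{nz}$ from \cite{lubich2004convolution}. Those bounds hold for $|\arg(-z)|\le\alpha'<\alpha$, which is where the ray angle $\pi-\alpha'$ comes from. You instead substitute $\xi=e^{-z\tau}$ in the coefficient formula and compare $F(\delta(e^{-z\tau})/\tau)$ with $F(z)$ pointwise. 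This needs no regularization, and your Potter, Cauchy-estimate and consistency steps are sound. However, the method-dependent input that replaces Lubich's $e_n$-bounds is missing, and with the contour you specify it is false.

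You assert that on $\Gamma^\tau$ one has $\delta(e^{-z\tau})/\tau\in\Sigma_{\pi-\theta}$ and $c|z|\le|\delta(e^{-z\tau})|/\tau\le C|z|$ because of \eqref{eq:CQ-multistep-condition}. That condition only concerns $|\xi|<1$, i.e.\ $\operatorname{Re}z>0$, whereas on the rays $|e^{-z\tau}|=e^{-\tau\operatorname{Re}z}>1$.

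You cannot keep the contour in $\operatorname{Re}z>0$ either. There $e^{zt_n}$ does not decay, and for $\ell>-1$ the triangle inequality only gives bounds of order $\tau^{-1}\Phi(\tau^{-1})$.

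For choices inside your range $\theta<\theta'<\alpha$ the claim actually fails. Take BDF2 ($\alpha=\pi/2$) with $\theta=\pi/6$ and $\theta'=\pi/4$. The point $z\tau=-1+\mathrm i$ lies on $\Gamma^\tau$ for every $n$. Yet $\delta(e^{1-\mathrm i})\approx-2.97+1.22\,\mathrm i$ has argument about $158^\circ>150^\circ$, so $F$ is evaluated outside its sector.

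What you need is a separate lemma, with $\theta'\in(\theta,\pi/2)$ chosen sufficiently close to $\pi/2$; the constraint $\theta'<\alpha$ plays no role here.
\begin{itemize}
\item For $|z\tau|\le r_0$, use $\delta(e^{-w})=w(1+O(|w|^p))$.
\item For $r_0\le|z\tau|\le R$ with $|\operatorname{Im}(z\tau)|\le\pi$, the point $e^{-z\tau}$ stays away from $\xi=1$. Where $\operatorname{Re}z\le0$, it lies in the thin annulus $1\le|\xi|\le e^{\pi\cot\theta'}$.
\item On that set, use analyticity and non-vanishing of $\delta$ in a neighbourhood of the closed unit disc except at $\xi=1$ (part of strong $A(\alpha)$-stability), continuity of $\arg\delta$ up to $|\xi|=1$, and compactness. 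These give $|\arg\delta|\le\pi-\alpha+\epsilon<\pi-\theta$ and the two-sided comparability.
\end{itemize}
The sector property must hold on the whole region swept by the deformation, not only on $\Gamma^\tau$.

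Finally, the horizontal segments at $\operatorname{Im}z=\pm\pi/\tau$ are not controlled by $e^{-c|z|t_n}$, since they reach into $\operatorname{Re}z>0$. They cancel exactly because the integrand is invariant under $z\mapsto z+2\pi\mathrm i/\tau$, which uses $n\in\mathbb N$.
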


\begin{proof}

The proof extends the proof of \cite[Theorem~2.1]{lubich2004convolution}, while keeping the radial estimates explicit. Fix $\nu>\max\{\rho,\ell,0\}$ and write $\widetilde F(z)=z^{-k}F(z)$, where the integer $k$ is chosen such that $k>\nu+1$. For $\xi$ sufficiently close to zero, Cauchy's formula applied to $\widetilde F$ yields
\[
\begin{aligned}
\sum_{n=0}^{\infty}f_n\xi^n
&=\frac1\tau
F\left(\frac{\delta(\xi)}{\tau}\right)\\
&=\frac{1}{2\pi\mathrm i}
\int_\Gamma F(\lambda)
\left(\frac{\delta(\xi)}{\tau\lambda}\right)^k
\frac{1}{\delta(\xi)-\tau\lambda}
\,\mathrm d\lambda,
\end{aligned}
\]
where $\Gamma$ is a contour in the sector of analyticity, parallel to its boundary and oriented with increasing imaginary part. Define $e_n(z)$ by the generating function
\[
\sum_{n=0}^{\infty}e_n(z)\xi^n
=
\left(\frac{\delta(\xi)}{z}\right)^k
\frac{1}{\delta(\xi)-z}.
\]
Extracting the coefficients of $\xi^n$ gives
\[
f_n=\frac{1}{2\pi\mathrm i}
\int_\Gamma F(\lambda)e_n(\tau\lambda)
\,\mathrm d\lambda.
\]
On the other hand, the inverse Laplace transform gives, for $t_n>0$,
\[
f(t_n)=\frac{1}{2\pi\mathrm i}
\int_\Gamma e^{t_n\lambda}F(\lambda)
\,\mathrm d\lambda.
\]
Consequently, subtracting the two representations yields
\begin{equation}\label{eq:modified-error-representation}
f_n-f(t_n)
=\frac{1}{2\pi\mathrm i}
\int_\Gamma F(\lambda)
\left[e_n(\tau\lambda)-e^{t_n\lambda}\right]
\,\mathrm d\lambda.
\end{equation}

Fix $\theta<\alpha'<\min\{\alpha,\pi/2\}$. By \cite[eqs.~(3.5)--(3.6)]{lubich2004convolution}, there exist a sufficiently small $r>0$ and a constant $\varrho\in(0,1)$ such that, for $z=\tau\lambda$,
\begin{equation}\label{approx_exp1}
\begin{aligned}
|e_n(z)-e^{nz}|
&\le C\left(\left|z^pe^{nz/2}\right|
+\frac{\varrho^n}{|z|^k}\right),
&&\text{for }0<|z|\le r,\quad |\arg(-z)|\le\alpha',\\
|e_n(z)-e^{nz}|
&\le C\left(|z|^p e^{2n|z|}
+\frac{\varrho^n}{|z|^k}\right),
&&\text{for }0<|z|\le r,
\end{aligned}
\end{equation}
and
\begin{equation}\label{approx_exp2}
|e_n(z)|\le C\frac{\varrho^n}{|z|^{k+1}},
\qquad\text{for } |z|\ge r,\quad |\arg(-z)|\le\alpha'.
\end{equation}
For each $t_n=n\tau$, deform $\Gamma$ to the circular arc
\[
\Gamma^0=\left\{\frac r{t_n}e^{\mathrm i\varphi}:
-\pi+\alpha'\le\varphi\le\pi-\alpha'\right\}
\]
and the rays
\[
\begin{aligned}
\Gamma_{\mathrm{near}}^\pm
&=\left\{se^{\pm\mathrm i(\pi-\alpha')}:
\frac{r}{t_n}\le s\le \frac{r}{\tau}\right\},\\
\Gamma_{\mathrm{far}}^\pm
&=\left\{se^{\pm\mathrm i(\pi-\alpha')}:
s\ge \frac{r}{\tau}\right\}.
\end{aligned}
\]

On the arc, parameterize $\lambda=(r/t_n)e^{\mathrm i\varphi}$.
Since $|\tau\lambda|=r/n$ and $|t_n\lambda|=r$, the second
bound in \eqref{approx_exp1} yields
\begin{equation}\label{eq:arc}
\begin{aligned}
\left|\int_{\Gamma^0}F(\lambda)
[e_n(\tau\lambda)-e^{t_n\lambda}]\,\mathrm d\lambda\right|
&\le C\frac r{t_n}\Phi(r/t_n)
\int_{-\pi+\alpha'}^{\pi-\alpha'}
\left[\left(\frac rn\right)^pe^{2r}
+\varrho^n\left(\frac nr\right)^k\right]\,\mathrm d\varphi\\
&\le Ct_n^{-1}\Phi(t_n^{-1})
\left(n^{-p}+\varrho^nn^k\right)\\
&\le C\tau^pt_n^{-p-1}\Phi(t_n^{-1}).
\end{aligned}
\end{equation}
The last step follows from
$\sup_{n\ge1}\varrho^nn^{k+p}<\infty$ and $n^{-p}=\tau^pt_n^{-p}$.

On each near ray, the first bound in \eqref{approx_exp1} gives
\begin{equation}\label{eq:near-split}
\begin{aligned}
\left|\int_{\Gamma_{\mathrm{near}}^\pm}F(\lambda)
[e_n(\tau\lambda)-e^{t_n\lambda}]\,\mathrm d\lambda\right|
&\le C\int_{r/t_n}^{r/\tau}\Phi(s)
\left[(\tau s)^pe^{-ct_ns/2}
+\frac{\varrho^n}{(\tau s)^k}\right]\,\mathrm ds\\
&\le C\Phi(t_n^{-1})
\int_{r/t_n}^{r/\tau}(t_ns)^\nu
\left[(\tau s)^pe^{-ct_ns/2}
+\frac{\varrho^n}{(\tau s)^k}\right]\,\mathrm ds\\
&= Ct_n^{-1}\Phi(t_n^{-1})
\left[n^{-p}\int_r^{rn}y^{\nu+p}e^{-cy/2}\,\mathrm dy
+\varrho^nn^k\int_r^{rn}y^{\nu-k}\,\mathrm dy\right],
\end{aligned}
\end{equation}
where the second inequality follows from \eqref{eq:RV-rescaling} with $\eta=t_ns/r\ge1$. Setting $y=t_ns$, the first integral in the last equation satisfies
\[
\int_r^\infty y^{\nu+p}e^{-cy/2}\,\mathrm dy
<\infty.
\]
Since $k>\nu+1$, the second integral satisfies
\[
\int_r^{rn}y^{\nu-k}\,\mathrm dy
\le\int_r^\infty y^{\nu-k}\,\mathrm dy
<\infty.
\]
Moreover, since $0<\varrho<1$, $\varrho^n n^{k+p}$ is uniformly bounded for $n\ge1$. Consequently, \eqref{eq:near-split} is bounded by
\[
Ct_n^{-1}\Phi(t_n^{-1})(n^{-p}+\varrho^nn^k)
\le C\tau^pt_n^{-p-1}\Phi(t_n^{-1}).
\]

On each far ray, \eqref{approx_exp2} and the triangle inequality yield
\begin{equation}\label{eq:far-split}
\begin{aligned}
\left|\int_{\Gamma_{\mathrm{far}}^\pm}F(\lambda)
[e_n(\tau\lambda)-e^{t_n\lambda}]\,\mathrm d\lambda\right|
&\le C\int_{r/\tau}^\infty\Phi(s)
\left[\frac{\varrho^n}{(\tau s)^{k+1}}+e^{-ct_ns}\right]\,\mathrm ds\\
&\le C\Phi(t_n^{-1})
\int_{r/\tau}^\infty(t_ns)^\nu
\left[\frac{\varrho^n}{(\tau s)^{k+1}}+e^{-ct_ns}\right]\,\mathrm ds\\
&= Ct_n^{-1}\Phi(t_n^{-1})
\left[\varrho^nn^{k+1}\int_{rn}^\infty y^{\nu-k-1}\,\mathrm dy
+\int_{rn}^\infty y^\nu e^{-cy}\,\mathrm dy\right].
\end{aligned}
\end{equation}
The two integrals satisfy
\begin{equation*}
\begin{aligned}
\int_{rn}^\infty y^{\nu-k-1}\,\mathrm dy
&=\frac{(rn)^{\nu-k}}{k-\nu}\\
\int_{rn}^\infty y^\nu e^{-cy}\,\mathrm dy
&\le (rn)^{-p}\int_r^\infty y^{\nu+p}e^{-cy}\,\mathrm dy\\
&\le Cn^{-p}.
\end{aligned}
\end{equation*}
Thus \eqref{eq:far-split} is bounded by
\[
Ct_n^{-1}\Phi(t_n^{-1})
\left(\varrho^nn^{\nu+1}+n^{-p}\right)
\le C\tau^pt_n^{-p-1}\Phi(t_n^{-1}),
\]
because $\sup_{n\ge1}\varrho^nn^{\nu+p+1}<\infty$.
Adding the arc and the four ray contributions in
\eqref{eq:modified-error-representation} proves
\eqref{eq:CQ-kernel-error}.

To establish the stability estimate, we apply the same contour to the inverse Laplace to obtain
\[
\begin{aligned}
\|f(t_n)\|
&\le C\frac r{t_n}\Phi(r/t_n)e^r
+C\int_{r/t_n}^\infty\Phi(s)e^{-ct_ns}\,\mathrm ds\\
&\le Ct_n^{-1}\Phi(t_n^{-1})
\left(1+\int_r^\infty y^\nu e^{-cy}\,\mathrm dy\right)\\
&\le Ct_n^{-1}\Phi(t_n^{-1}).
\end{aligned}
\]
Hence
\[
\|f_n\|\le\|f_n-f(t_n)\|+\|f(t_n)\|
\le Ct_n^{-1}\Phi(t_n^{-1}),
\]
which proves \eqref{eq:CQ-kernel-stability}. All comparison
constants and all integrals after rescaling are independent of
$t_n$, so the estimates hold uniformly for all positive grid times.
\end{proof}

\begin{corollary}\label{cor:CQ-modified-symbol}
Under the assumptions of Theorem~\ref{thm:CQ-kernel}, let
$m\in\mathbb R$ and define
\[
F^{(m)}(z)=z^mF(z),\qquad f^{(m)}=\mathcal L^{-1}\{F^{(m)}\}.
\]
Define the corresponding weights and scaled weights by
\[
\sum_{n=0}^\infty\omega_n^{(m)}\xi^n
=F^{(m)}\!\left(\frac{\delta(\xi)}\tau\right),
\qquad f_n^{(m)}=\frac{\omega_n^{(m)}}\tau.
\]
Then, for all $t_n=n\tau>0$,
\begin{align*}
\|f_n^{(m)}-f^{(m)}(t_n)\|
&\le C\tau^pt_n^{-m-p-1}\Phi(t_n^{-1}),\\
\|f_n^{(m)}\|+\|f^{(m)}(t_n)\|
&\le Ct_n^{-m-1}\Phi(t_n^{-1}).
\end{align*}
The constant $C$ is independent of $\tau$, $n$ and $t_n$.
\end{corollary}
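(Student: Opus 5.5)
The plan is to reduce the corollary to Theorem~\ref{thm:CQ-kernel}, applied to the modified symbol $F^{(m)}$ with the new radial majorant
\[
\Phi^{(m)}(r):=r^m\Phi(r),\qquad r>0 .
\]
Since $|z^m|=|z|^m$ on $\Sigma_{\pi-\theta}$ (principal branch), we have
\[
|F^{(m)}(z)|\le C|z|^m\Phi(|z|)=C\Phi^{(m)}(|z|).
\]
Moreover, $z\mapsto z^m$ is analytic on the slit sector, so $F^{(m)}$ is analytic there.

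Next I check that $\Phi^{(m)}$ satisfies Assumption~1. It is positive and continuous on $(0,\infty)$. For fixed $\lambda>0$,
\[
\frac{\Phi^{(m)}(\lambda r)}{\Phi^{(m)}(r)}=\lambda^m\,\frac{\Phi(\lambda r)}{\Phi(r)},
\]
which tends to $\lambda^{m+\rho}$ as $r\to0$ and to $\lambda^{m+\ell}$ as $r\to\infty$. Hence $\Phi^{(m)}$ is regularly varying at zero and infinity with indices $\rho+m$ and $\ell+m$. Lemma~\ref{lem:potter-global} and the rescaling bound \eqref{eq:RV-rescaling} therefore hold for $\Phi^{(m)}$, now with some $\nu>\max\{\rho+m,\ell+m,0\}$. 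The proof of Theorem~\ref{thm:CQ-kernel} used only these properties, together with the choice $k>\nu+1$, which can be made for the new $\nu$. The weights $\omega_n^{(m)}$ are defined exactly as in \eqref{eq:CQ-weights}, with $F$ replaced by $F^{(m)}$.

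Applying Theorem~\ref{thm:CQ-kernel} then gives
\[
\|f_n^{(m)}-f^{(m)}(t_n)\|\le C\tau^pt_n^{-p-1}\Phi^{(m)}(t_n^{-1}),\qquad
\|f_n^{(m)}\|+\|f^{(m)}(t_n)\|\le Ct_n^{-1}\Phi^{(m)}(t_n^{-1}).
\]
Since $\Phi^{(m)}(t_n^{-1})=t_n^{-m}\Phi(t_n^{-1})$, these are exactly the claimed bounds. The constants are independent of $\tau$, $n$ and $t_n$ because the constants in the theorem depend only on the majorant and the method.

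The step I expect to need the most care is making sure $f^{(m)}=\mathcal L^{-1}\{F^{(m)}\}$ is meaningful pointwise for $t_n>0$ when $m$ is large, since $F^{(m)}$ may then grow at infinity. The theorem's proof sidesteps this: $f(t_n)$ is defined by the contour integral over $\Gamma$ bent into the left half-plane, where $e^{t_n\lambda}$ decays exponentially and absorbs any polynomial growth of $\Phi^{(m)}$. I would state explicitly that $f^{(m)}(t)$ is understood in this sense for $t>0$, which agrees with the classical inverse Laplace transform whenever the latter exists. Similarly, the representation of $f_n^{(m)}$ uses $\widetilde F(z)=z^{-k}F^{(m)}(z)$ with $k$ large, and this is valid regardless of growth once $k>\nu+1$. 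With this clarified, no further estimates are required.
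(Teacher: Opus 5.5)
Your proposal is correct and matches the paper's proof, which likewise applies Theorem~\ref{thm:CQ-kernel} to $F^{(m)}$ with the majorant $r^m\Phi(r)$ (regularly varying with indices $m+\rho$ and $m+\ell$) and reads off the factor $t_n^{-m}\Phi(t_n^{-1})$. Your extra remark, that $f^{(m)}(t_n)$ is understood through the bent contour integral when $F^{(m)}$ grows at infinity, is a sensible clarification that the paper leaves implicit.
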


\begin{proof}
Since $\|F^{(m)}(z)\|\le C|z|^m\Phi(|z|)$, and $r^m\Phi(r)$ is positive, continuous, and regularly varying at zero and infinity with indices $m+\rho$ and $m+\ell$, respectively, the desired result follows by applying Theorem~\ref{thm:CQ-kernel} to $F^{(m)}$.
\end{proof}

\begin{theorem}\label{thm:CQ-power-data}
Under the assumptions of Theorem~\ref{thm:CQ-kernel}, let
$g_\beta(t)=t^{\beta-1}/\Gamma(\beta)$ for $t>0$, where $\beta>0$. Set $g_\beta(0)=0$ for $0<\beta<1$. Then, for all $t_n=n\tau>0$,
\begin{equation}\label{eq:CQ-power-data-error}
\|F(\partial_\tau)g_\beta(t_n)-F(\partial_t)g_\beta(t_n)\|
\le
\begin{cases}
C\tau^\beta t_n^{-1}\Phi(t_n^{-1}),&0<\beta\le p,\\[1mm]
C\tau^p t_n^{\beta-p-1}\Phi(t_n^{-1}),&\beta\ge p.
\end{cases}
\end{equation}
The constant is independent of $\tau$, $n$ and $t_n$.
\end{theorem}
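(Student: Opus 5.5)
Following the proof of Theorem~\ref{thm:CQ-kernel} and \cite[Theorem~2.2]{lubich2004convolution}, we plan to write both responses as contour integrals and bound them piece by piece on the same contour $\Gamma^0\cup\Gamma_{\mathrm{near}}^\pm\cup\Gamma_{\mathrm{far}}^\pm$.

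\emph{Continuous response.} The Laplace transform of $g_\beta$ is $\lambda^{-\beta}$, so $F(\partial_t)g_\beta$ is the inverse transform of $F(\lambda)\lambda^{-\beta}$. This is the kernel $f^{(-\beta)}$ of Corollary~\ref{cor:CQ-modified-symbol} with $m=-\beta$. Hence $F(\partial_t)g_\beta(t_n)=f^{(-\beta)}(t_n)$, and this quantity is bounded by $Ct_n^{\beta-1}\Phi(t_n^{-1})$.

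\emph{Discrete response.} The discrete response is less direct. The generating function of the samples $g_\beta(t_n)$ is not exactly $(\delta(\xi)/\tau)^{-\beta}$. We will use the standard splitting
\[
\sum_{n\ge0} g_\beta(t_n)\xi^n=\Bigl(\tfrac{\delta(\xi)}{\tau}\Bigr)^{-\beta}+R_\beta(\xi).
\]
The first term is the exact CQ of $g_\beta$ applied to the identity symbol. The remainder $R_\beta$ is generated by a sequence $r_n$ whose size follows from Corollary~\ref{cor:CQ-modified-symbol} with $F\equiv 1$, $\Phi\equiv1$, $m=-\beta$ (see Lubich's Lemma for $\beta$ noninteger, and the polynomial-exactness argument for $\beta\le p$ integer):
\[
|r_n|\le C\tau^{\beta-1}\cdot\tau^{\min\{p,\beta\}}\,\tau^{-\min\{p,\beta\}}\,(\ldots).
\]
More precisely, we expect $g_\beta(t_n)-\tau^{-1}\omega^{(-\beta)}_n[1]=O(\tau^{p}t_n^{\beta-p-1})$. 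Then
\[
F(\partial_\tau)g_\beta(t_n)=\tau f^{(-\beta)}_n\cdot\tau^{-1}\cdot\tau+\sum_j\omega_j r_{n-j},
\]
that is, $F(\partial_\tau)g_\beta(t_n)=f_n^{(-\beta)}+(\omega*r)_n$ after bookkeeping of the factor $\tau$. By the composition rule \eqref{eq:CQ-composition}, the first piece is exactly the CQ weight of $F^{(-\beta)}$.

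\emph{Error splitting.} The error therefore splits into two terms. The first is $f^{(-\beta)}_n-f^{(-\beta)}(t_n)$, which Corollary~\ref{cor:CQ-modified-symbol} bounds by $C\tau^pt_n^{\beta-p-1}\Phi(t_n^{-1})$. This already gives the case $\beta\ge p$. For $\beta<p$, we use $\tau^pt_n^{-p}\le\tau^\beta t_n^{-\beta}$ since $\tau\le t_n$, which gives the claimed $\tau^\beta t_n^{-1}\Phi$. The second term is the discrete convolution $\sum_{j=0}^n\omega_j r_{n-j}$.

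\emph{Main obstacle: the convolution remainder.} The hard step is bounding $\sum_j\omega_jr_{n-j}$ uniformly in $n$ without losing the factor $\Phi(t_n^{-1})$. Here $\omega_j=\tau f_j$ with $\|f_j\|\le Ct_j^{-1}\Phi(t_j^{-1})$ for $j\ge1$ by \eqref{eq:CQ-kernel-stability}, while $\omega_0=F(\delta(0)/\tau)$ is bounded by $C\Phi(\tau^{-1})$. We split the sum at $j\approx n/2$.
\begin{itemize}
\item For $j\le n/2$, the factor $r_{n-j}$ is evaluated at times of order $t_n$. The sum $\sum_{j\le n/2}\tau\|f_j\|$ is controlled by $\int_\tau^{t_n}s^{-1}\Phi(s^{-1})\,ds$, together with $\omega_0$.
\item For $j>n/2$, the factor $\|f_j\|$ is of order $t_n^{-1}\Phi(t_n^{-1})$, and we sum $|r_{n-j}|$.
\end{itemize}
The danger is that $\int s^{-1}\Phi(s^{-1})\,ds$ may diverge near $s=0$ when the index at infinity satisfies $\ell\ge0$. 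To handle this, we would first try an alternative that avoids splitting: treat the remainder through a contour representation as well. The correction $R_\beta$ is analytic with Taylor coefficients concentrated near $n=0$ (decaying like $\tau^{\beta-1}n^{\beta-1-p}$ or faster). We would then write $F(\partial_\tau)$ applied to it via the near/far contour estimates \eqref{approx_exp1}--\eqref{approx_exp2} with $\lambda^{-\beta}$ replaced by the appropriate correction. The rescaling bound \eqref{eq:RV-rescaling}, with $\nu$ enlarged if needed, then lets each piece be compared to $\Phi(t_n^{-1})$, exactly as in the near-ray estimate of Theorem~\ref{thm:CQ-kernel}. Finally, we would use the Potter bound, Lemma~\ref{lem:potter-global}, to convert factors $\Phi(\tau^{-1})$ into $(t_n/\tau)^{\nu}\Phi(t_n^{-1})$. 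These can then be absorbed, since the remainder carries enough powers of $\tau/t_n$ (choosing the order $k$ large relative to $\nu$ as in the kernel proof).
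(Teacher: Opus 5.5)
\textbf{There is a genuine gap: the convolution remainder is never bounded, and the proposed fix cannot bound it.}

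Your reduction $F(\partial_\tau)g_\beta(t_n)=f_n^{(-\beta)}+\sum_{j=0}^n\omega_jr_{n-j}$ is correct once the factor $\tau$ in your generating-function identity is fixed. Here $r_m=g_\beta(t_m)-\tau^{-1}\omega_m^{(-\beta)}[1]=O(\tau^{\beta-1}(m+1)^{\beta-p-1})$. Your handling of $f_n^{(-\beta)}-f^{(-\beta)}(t_n)$ is also correct.

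The gap is the convolution remainder, which you flag but do not resolve. Your fix assumes the remainder carries spare powers of $\tau/t_n$ that a large $k$ provides, and this is false. The decay of $r_m$ is fixed by the order $p$ of the method. The exponent $k$ in $\widetilde F=z^{-k}F$ only regularizes the Cauchy representation of the weights and has no influence on $r_m$. For $\beta\ge p$ and $j\le n/2$, the factor $r_{n-j}\approx\tau^{\beta-1}n^{\beta-p-1}$ already equals the target rate divided by $\Phi(t_n^{-1})$. So nothing is left to absorb the Potter factor $(t_n/\tau)^\nu$.

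Here is a concrete failure. Take backward Euler ($p=1$), $\beta=2$, and $F(z)=z^b$ with $0<b<1$ and $\Phi(r)=r^b$.
\begin{itemize}
\item Then $r_m=-\tau$ for every $m$, so the remainder is $-\tau\sum_{j=0}^n\omega_j$.
\item Its $j=0$ term alone has size $\tau\omega_0=\tau^{1-b}$. This exceeds the target $\tau t_n^{-b}$ by the factor $n^b$.
\item Hence no estimate through $\|\omega_j\|\,|r_{n-j}|$ can work, whatever contour or Potter bounds you use for $\omega_j$.
\end{itemize}
The correct size comes only from structure: for BE, $\sum_{j=0}^n\omega_j=f_n^{(-1)}$ is the scaled CQ weight of $z^{-1}F(z)$. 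Your phrase ``$\lambda^{-\beta}$ replaced by the appropriate correction'' is exactly where this structure must be supplied, and the proposal leaves it undefined.

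\textbf{The missing idea} is to expand the power sequence to arbitrarily high order before taking norms; this is the paper's route. By \cite[Lemma~4.1]{lubich2004convolution}, for any integer $N>p$,
\[
\frac{j^{\beta-1}}{\Gamma(\beta)}=\delta_j^{(-\beta)}+\sum_{l=p}^{N-1}c_l\delta_j^{(l-\beta)}+\widetilde r_j,
\qquad |\widetilde r_j|\le c(j+1)^{\beta-N-1}.
\]
The middle terms have generating functions $\delta(\xi)^{l-\beta}$. By \eqref{eq:CQ-composition}, their contribution to $F(\partial_\tau)g_\beta(t_n)=\tau^\beta\sum_jf_{n-j}\,j^{\beta-1}/\Gamma(\beta)$ is exactly $c_l\tau^lf_n^{(l-\beta)}$, the scaled CQ weight of $z^{l-\beta}F(z)$. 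Corollary~\ref{cor:CQ-modified-symbol} bounds this by $C\tau^lt_n^{\beta-l-1}\Phi(t_n^{-1})$, which is dominated by the target for $l\ge p$. In the example above, $j=\delta_j^{(-2)}-\delta_j^{(-1)}$ and $\widetilde r_j=0$, so your entire remainder is the single term $-\tau f_n^{(-1)}$.

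Only the last remainder $\tau^\beta\sum_jf_{n-j}\widetilde r_j$ is estimated in norm. For it you can use your split at $n/2$ together with \eqref{eq:RV-rescaling}, because $N$ is now at your disposal. Choosing $N>\max\{p,\beta+\nu\}$ gives
\[
\sum_{j=0}^n(n-j+1)^{-\nu-1}(j+1)^{\beta-N-1}\le Cn^{-\nu-1}.
\]
This absorbs the loss $n^\nu$ and yields $C\tau^\beta t_n^{-1}\Phi(t_n^{-1})$. The large parameter that does the work is this expansion order $N$, not $k$.
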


\begin{proof}
Fix $\nu>\max\{\rho,\ell,0\}$ and an integer
$N>\max\{p,\beta+\nu\}$. For $k\in\mathbb{R}$, let
$\delta_j^{(k)}$ be defined by $\sum_{j=0}^{\infty}\delta_j^{(k)}\xi^j=\delta(\xi)^k$. By \cite[Lemma~4.1]{lubich2004convolution}, there exist constants
$c_p,\ldots,c_{N-1}$ such that
\begin{equation}
\label{eq:power-sequence-expansion}
\frac{j^{\beta-1}}{\Gamma(\beta)}
=
\delta_j^{(-\beta)}
+
\sum_{l=p}^{N-1}
c_l\delta_j^{(l-\beta)}
+
r_j,
\quad j\geq1,
\end{equation}
where $|r_j|\leq cj^{\beta-N-1}$. At $j=0$, interpret $j^{\beta-1}/\Gamma(\beta)$ as $1$ when $\beta=1$ and as $0$ otherwise. Define $r_0$ by \eqref{eq:power-sequence-expansion} using this convention. Furthermore,
\begin{equation}
\label{eq:power-sequence-remainder-unified}
|r_j|
\leq
c(j+1)^{\beta-N-1},
\quad j\geq0.
\end{equation}

Since $\omega_j=\tau f_j$, the discrete convolution is
\[
F(\partial_\tau)g_\beta(t_n)
=\tau^\beta\sum_{j=0}^n f_{n-j}
\frac{j^{\beta-1}}{\Gamma(\beta)}.
\]
Using \eqref{eq:power-sequence-expansion}, we obtain
\begin{equation}\label{eq:discrete-power-expansion}
F(\partial_\tau)g_\beta(t_n)
=f_n^{(-\beta)}
+\sum_{l=p}^{N-1}c_l\tau^lf_n^{(l-\beta)}+\mathcal R_n,
\end{equation}
where $\mathcal R_n=\tau^\beta\sum_{j=0}^n f_{n-j}r_j$.

We first estimate the remainder $\mathcal R_n$. Theorem~\ref{thm:CQ-kernel} gives $\|f_l\|\le C(l\tau)^{-1}\Phi((l\tau)^{-1})$ for $l\ge1$. For $l=0$, the definition of the convolution quadrature weights gives
\[
\|f_0\|=\tau^{-1}\left\|F\!\left(\frac{\delta(0)}\tau\right)\right\|
\le C\tau^{-1}\Phi(|\delta(0)|/\tau)
\le C\tau^{-1}\Phi(\tau^{-1}).
\]
Thus, for every $l\ge0$,
\begin{equation}\label{eq:unified-fq-bound}
\|f_l\|\le C\tau^{-1}(l+1)^{-1}
\Phi\!\left(((l+1)\tau)^{-1}\right).
\end{equation}
For $0\le j\le n$, the ratio $n/(n-j+1)$ is at least $1/2$.
Equation~\eqref{eq:RV-rescaling} therefore gives
\begin{equation}\label{eq:Phi-grid-comparison}
\Phi\!\left(((n-j+1)\tau)^{-1}\right)
\le C\Phi(t_n^{-1})
\left(\frac{n}{n-j+1}\right)^\nu.
\end{equation}
Combining \eqref{eq:power-sequence-remainder-unified},
\eqref{eq:unified-fq-bound}, and \eqref{eq:Phi-grid-comparison},
we obtain
\[
\|\mathcal R_n\|
\le C\tau^{\beta-1}\Phi(t_n^{-1})n^\nu
\sum_{j=0}^n(n-j+1)^{-\nu-1}(j+1)^{\beta-N-1}.
\]
For $0\le j\le\lfloor n/2\rfloor$, $n-j+1\ge n/2$ and
$N>\beta$, so
\[
\sum_{j=0}^{\lfloor n/2\rfloor}
(n-j+1)^{-\nu-1}(j+1)^{\beta-N-1}
\le Cn^{-\nu-1}\sum_{j=0}^\infty(j+1)^{\beta-N-1}
\le Cn^{-\nu-1}.
\]
For $\lfloor n/2\rfloor<j\le n$, $j+1\ge n/2$ and $\nu>0$,
so the remaining sum satisfies
\[
\begin{aligned}
\sum_{j=\lfloor n/2\rfloor+1}^n
(n-j+1)^{-\nu-1}(j+1)^{\beta-N-1}
&\le Cn^{\beta-N-1}\sum_{l=0}^\infty(l+1)^{-\nu-1}\\
&\le Cn^{\beta-N-1}\le Cn^{-\nu-1},
\end{aligned}
\]
where the last inequality uses $N>\beta+\nu$. It follows that
\begin{equation}\label{eq:remainder-estimate-preliminary}
\|\mathcal R_n\|
\le C\tau^{\beta-1}n^{-1}\Phi(t_n^{-1})
=C\tau^\beta t_n^{-1}\Phi(t_n^{-1}).
\end{equation}

Since $\mathcal L\{g_\beta\}(z)=z^{-\beta}$, the continuous
response is $F(\partial_t)g_\beta(t_n)=f^{(-\beta)}(t_n)$.
Subtracting it from \eqref{eq:discrete-power-expansion} gives
\begin{equation}\label{eq:power-data-error-decomposition}
\begin{aligned}
F(\partial_\tau)g_\beta(t_n)-F(\partial_t)g_\beta(t_n)
=f_n^{(-\beta)}-f^{(-\beta)}(t_n)
+\sum_{l=p}^{N-1}c_l\tau^lf_n^{(l-\beta)}+\mathcal R_n.
\end{aligned}
\end{equation}
Corollary~\ref{cor:CQ-modified-symbol}, first with $m=-\beta$
and then with $m=l-\beta$, gives
\begin{align}
\|f_n^{(-\beta)}-f^{(-\beta)}(t_n)\|
&\le C\tau^pt_n^{\beta-p-1}\Phi(t_n^{-1}),
\label{eq:leading-power-error}\\
\|\tau^lf_n^{(l-\beta)}\|
&\le C\tau^lt_n^{\beta-l-1}\Phi(t_n^{-1}).
\label{eq:correction-power-bound}
\end{align}
If $0<\beta\le p$, then, for $l\ge p$,
\[
\begin{aligned}
\tau^pt_n^{\beta-p-1}
&=\tau^\beta t_n^{-1}(\tau/t_n)^{p-\beta}
\le\tau^\beta t_n^{-1},\\
\tau^lt_n^{\beta-l-1}
&=\tau^\beta t_n^{-1}(\tau/t_n)^{l-\beta}
\le\tau^\beta t_n^{-1}.
\end{aligned}
\]
If $\beta\ge p$, then
\[
\begin{aligned}
\tau^lt_n^{\beta-l-1}
&=\tau^pt_n^{\beta-p-1}(\tau/t_n)^{l-p}
\le\tau^pt_n^{\beta-p-1},\\
\tau^\beta t_n^{-1}
&=\tau^pt_n^{\beta-p-1}(\tau/t_n)^{\beta-p}
\le\tau^pt_n^{\beta-p-1}.
\end{aligned}
\]
Using these inequalities together with
\eqref{eq:remainder-estimate-preliminary},
\eqref{eq:leading-power-error}, and
\eqref{eq:correction-power-bound} in
\eqref{eq:power-data-error-decomposition}, we obtain
\eqref{eq:CQ-power-data-error} with constants uniform for $t_n>0$.
\end{proof}

\begin{remark}\label{rem:two-endpoint-time-scales}
The radial assumption admits symbols that satisfy neither a
nonincreasing radial bound nor the classical global single-power
bound on the unshifted sector. Consider the example
\[
F(z)=z+z^{-1},\qquad \Phi(r)=r+r^{-1},\qquad r>0.
\]
The symbol is analytic in $\Sigma_{\pi-\theta}$ and satisfies
$|F(z)|\le\Phi(|z|)$. The function $\Phi$ is positive and
continuous, and, for every $\lambda>0$,
\[
\lim_{r\to0}\frac{\Phi(\lambda r)}{\Phi(r)}=\lambda^{-1},
\qquad
\lim_{r\to\infty}\frac{\Phi(\lambda r)}{\Phi(r)}=\lambda.
\]
Thus the radial assumption holds with $\rho=-1$ and $\ell=1$. The inverse Laplace transform of $F$ is $\delta_0'+1$, where $\delta_0'$ denotes the derivative of the Dirac distribution. Moreover, $F$ does not admit a global single-power bound on the unshifted sector.
\end{remark}

\section{Applications to fractional viscoelasticity and memory operators} \label{sec:applications}
The estimates in Section~\ref{sec:regularly-varying-majorants} give the CQ error in terms of a radial bound for the Laplace symbol. We apply them to constitutive responses in fractional viscoelasticity, distributed-order, and logarithmic-power memory models. Throughout this section, the multistep method satisfies the stability and order conditions of the theorem. The operators $I_t^q$ and $D_t^q$ are the Riemann--Liouville integral and derivative, with Laplace $z^{-q}$ and $z^q$. Here, complex powers and logarithms use the principal branch.

Let $g_\beta(t)=t^{\beta-1}/\Gamma(\beta)$ with $\beta>0$ and set
$u(t)=F(\partial_t)g_\beta(t)$. Its CQ approximation is defined by
\[
F\left(\frac{\delta(\xi)}{\tau}\right)
 =\sum_{j=0}^{\infty}\omega_j\xi^j,
\qquad
u_\tau^n=\sum_{j=0}^n\omega_jg_\beta(t_{n-j}).
\]
Theorem~\ref{thm:CQ-power-data} yields
\begin{equation}\label{eq:application-power-error}
|u_\tau^n-u(t_n)|
\leq C\tau^{\min\{p,\beta\}}
 t_n^{\beta-\min\{p,\beta\}-1}\Phi(t_n^{-1}).
\end{equation}
CQ error estimates for equations with memory are developed in
\cite{lubich1988convolution,lubich2004convolution,
lubich1996nonsmooth,cuesta2006convolution}. A particular hypothesis
is the global bound $|F(z)|\leq M|z|^{-\mu}$ on an unshifted
sector, as in \cite[eq.~(4.1)]{bazhlekova2015analysis}.
The framework of \cite[eq.~(2.1)]{lubich2004convolution} also
allows shifted sectors. Consequently, failure of a global power
bound on the unshifted sector does not by itself exclude a
classical CQ analysis. A finite sum of power symbols can also be
estimated term by term. The estimates used here apply directly to the complete symbol and retain the radial factor $\Phi(t_n^{-1})$.

\subsection{Fractional viscoelastic models}
\label{subsec:viscoelastic-applications}

Fractional Maxwell and Kelvin--Voigt models are obtained by connecting fractional viscoelastic elements in series and in parallel, respectively. These models describe power-type creep and relaxation in complex fluids and soft materials; see
\cite{jaishankar2013power,bonfanti2020fractional}. Here the quantities of interest are strain under a prescribed stress and stress under a prescribed strain.

\subsubsection{Fractional Maxwell model}

Let \(0<a<b<1\) and \(c_1,c_2>0\). The constitutive equation is
\[
D_t^b\varepsilon
=c_1D_t^{b-a}\sigma+c_2\sigma,
\]
where \(\sigma\) and \(\varepsilon\) denote stress and strain.
Under zero history, this is equivalent to
\[
\varepsilon=c_1I_t^a\sigma+c_2I_t^b\sigma,
\qquad
F(z)=c_1z^{-a}+c_2z^{-b}.
\]
The triangle inequality gives $|F(z)|\leq\Phi(|z|)$ with
$\Phi(r)=c_1r^{-a}+c_2r^{-b}$. This positive continuous function is regularly varying at zero with index $-b$ and at infinity with index $-a$, and hence satisfies the radial assumption. A global power bound would require $\mu\geq b$ as $r\to0$ and $\mu\leq a$ as $r\to\infty$, which are incompatible.

For the power-type stress
\(\sigma(t)=g_\beta(t)=t^{\beta-1}/\Gamma(\beta)\), the exact
strain is
\[
\varepsilon(t)
=\frac{c_1t^{\beta+a-1}}{\Gamma(\beta+a)}
+\frac{c_2t^{\beta+b-1}}{\Gamma(\beta+b)}.
\]
Writing \(\varepsilon_\tau^n\) for its convolution quadrature
approximation, \eqref{eq:application-power-error} gives
\begin{equation}\label{eq:application-maxwell-error}
|\varepsilon_\tau^n-\varepsilon(t_n)|
\leq C\tau^{\min\{p,\beta\}}
t_n^{\beta-\min\{p,\beta\}-1}
\bigl(c_1t_n^a+c_2t_n^b\bigr).
\end{equation}
For a unit step stress ($\beta=1$), \eqref{eq:application-maxwell-error} reduces to
\[
C\tau(c_1t_n^{a-1}+c_2t_n^{b-1}).
\]
For fixed orders and a fixed multistep method, the constant may be chosen independently of $c_1,c_2$. This follows from the corresponding estimates for the two power symbols $z^{-a}$ and $z^{-b}$. Thus the displayed bound records the two material contributions explicitly.

\subsubsection{Fractional Kelvin--Voigt model}

For the parallel constitutive law
\begin{equation}\label{eq:application-kelvin-voigt-law}
\sigma=c_1D_t^a\varepsilon+c_2D_t^b\varepsilon,
\qquad 0<a<b<1,
\end{equation}
the strain-to-stress symbol and its radial bound are
\[
F(z)=c_1z^a+c_2z^b,
\qquad \Phi(r)=c_1r^a+c_2r^b.
\]
For $c_1,c_2>0$, the positive continuous function $\Phi$ satisfies the radial assumption, with index $a$ at zero and index $b$ at infinity. Indeed,
\[
\Phi(r)\sim c_1r^a \text{ as } r\to0,
\qquad
\Phi(r)\sim c_2r^b \text{ as } r\to\infty ,
\]
so no global single-power bound can describe both endpoint regimes.

For the ramp strain $\varepsilon(t)=t=g_2(t)$, the exact response is
\[
\sigma(t)=\frac{c_1t^{1-a}}{\Gamma(2-a)}
+\frac{c_2t^{1-b}}{\Gamma(2-b)}.
\]
Applying \eqref{eq:application-power-error} to the second-order backward differentiation formula, with $\delta(\xi)=3/2-2\xi+\xi^2/2$, yields
\begin{equation}\label{eq:application-kelvin-voigt-error}
|\sigma_\tau^n-\sigma(t_n)|
\leq C\tau^2(c_1t_n^{-a-1}+c_2t_n^{-b-1}).
\end{equation}
The estimate explicitly retains the contributions of both fractional orders and their coefficients to the error bound.

\subsection{Distributed-order model}
\label{subsec:distributed-memory}

Distributed-order models and their CQ approximations have been studied in \cite{jin2016error, yin2021approximation}. We consider a uniform distribution of fractional orders over $[0,1]$:
\begin{equation}\label{eq:application-distributed-memory}
\varepsilon(t)=\int_0^1 I_t^q\sigma(t)\,\mathrm dq.
\end{equation}
Its Laplace symbol and radial bound are
\[
F(z)=\int_0^1 z^{-q}\,\mathrm dq=\frac{1-z^{-1}}{\log z},
\qquad
\Phi(r)=\int_0^1 r^{-q}\,\mathrm dq=\frac{1-r^{-1}}{\log r},
\]
with $F(1)=\Phi(1)=1$. The function $\Phi$ is positive and continuous, with
\[
\Phi(r)\sim\frac{r^{-1}}{\log(1/r)}\quad(r\to0),
\qquad
\Phi(r)\sim\frac1{\log r}\quad(r\to\infty).
\]
Thus it satisfies the radial assumption with indices $\rho=-1$ and $\ell=0$. However, no global single-power bound holds for $F$ on the unshifted sector, because its behaviors near the origin and at infinity cannot be described by the same power of $|z|$.

For $\sigma(t)=g_\beta(t)=t^{\beta-1}/\Gamma(\beta)$, the exact
response and its CQ error bound are
\[
\begin{aligned}
\varepsilon(t)&=\int_0^1
\frac{t^{\beta+q-1}}{\Gamma(\beta+q)}\,\mathrm dq,\\
|\varepsilon_\tau^n-\varepsilon(t_n)|
&\le C\tau^{\min\{p,\beta\}}
t_n^{\beta-\min\{p,\beta\}-1}
\frac{1-t_n}{\log(1/t_n)}.
\end{aligned}
\]
Here \eqref{eq:application-power-error} is applied to the complete symbol, and the quotient is assigned its continuous value $1$ at $t_n=1$. By treating the distributed-order symbol as a whole, the estimate preserves the logarithmic factor and avoids separate CQ estimates for each $q\in[0,1]$.

\subsection{Logarithmic-power memory model}
\label{subsec:logarithmic-power-memory}

Power-type kernels are a standard representation of long memory effects in fractional models. When the fractional order itself is an unknown parameter, its identification requires the sensitivity of the model response with respect to that order, which naturally leads to derivatives of fractional operators with respect to their order.
Differentiating a fractional integral with respect to its order produces a logarithmic correction to the underlying power-type kernel.
Such logarithmic-power operators appear, for example, in fractional
sensitivity equations for parameter identification
\cite{kharazmi2019fractional}, and weakly singular Volterra equations
with combined logarithmic-power kernels \cite{singh2024weakly}.
These considerations motivate the following derivative of a fractional integral with respect to its order.

For $0<a<1$, consider
\begin{equation*}
\mathcal{K}_a g(t)
:=
\frac{\partial}{\partial a}I_t^a g(t)
=
\int_0^t
\frac{(t-s)^{a-1}}{\Gamma(a)}
\bigl(\log(t-s)-\psi(a)\bigr)g(s)\,ds,
\end{equation*}
where $\psi(a)=\Gamma'(a)/\Gamma(a)$. Since
\[
\mathcal{L}\{I_t^a g\}(z)=z^{-a}\widehat g(z),
\]
the corresponding Laplace symbol is
\begin{equation*}
F(z)=-z^{-a}\log z.
\end{equation*}
Since $|F(r)|=r^{-a}|\log r|$ for $r>0$, no bound of the form $|F(z)|\le M|z|^{-\mu}$ can hold globally on $\Sigma_{\pi-\theta}$ for any fixed $\mu$:
the behavior as $r\to0$ requires $\mu>a$, whereas that as
$r\to\infty$ requires $\mu<a$. We now verify the radial assumption. Define
\[
\Phi(r)
=
r^{-a}\bigl(c_a+|\log r|\bigr),
\qquad
c_a:=1+\frac1a.
\]
Then $|F(z)|\le C\Phi(|z|)$. Moreover, $\Phi$ is positive and continuous and regularly varying with index $-a$ at both endpoints. Thus the radial assumption holds, and \eqref{eq:application-power-error} gives
\begin{equation*}
|u_\tau^n-u(t_n)|
\le
C\tau^{\min\{p,\beta\}}
t_n^{a+\beta-\min\{p,\beta\}-1}
\left(c_a+|\log t_n|\right).
\end{equation*}
The estimate therefore retains the logarithmic correction associated with the fractional-order sensitivity. Using instead the power estimate
\[
|\log t|\le C_\epsilon t^{-\epsilon},
\qquad 0<t\le1,
\]
would replace the logarithmically corrected factor
$t^a|\log t|$ by the coarser power $t^{a-\epsilon}$.

Section~\ref{sec:numerical-experiments} focuses on the
distributed-order operator introduced in
Section~\ref{subsec:distributed-memory}.
We first examine its response to the ramp input
$g_2(t)=t$, which admits an explicit integral
representation and allows us to verify the
convergence rates predicted by
\eqref{eq:application-power-error} for BE and SBD.
We then investigate the temporal convergence of
the corresponding PDE discretizations for smooth
and nonsmooth initial data.

Table~\ref{tab:radial-examples} summarizes representative Laplace symbols, admissible radial bounds, and their regular variation indices. Here $0<a<b<1$, $c_1,c_2>0$, and $c_a=1+a^{-1}$ in the logarithmic-power case. The indices $\rho$ and $\ell$ characterize the behavior of $\Phi(r)$ as $r\to0$ and $r\to\infty$, respectively. The table illustrates how the full factor $\Phi(t_n^{-1})$ retains distinct power-ltype scales and logarithmic corrections in the CQ error bounds.

\begin{table}[htbp]
\centering
\caption{Examples of radial symbol bounds, their regular
variation indices, and physical backgrounds.}
\label{tab:radial-examples}
\small
\setlength{\tabcolsep}{3pt}
\renewcommand{\arraystretch}{1.4}
\begin{tabularx}{\textwidth}{p{0.2\textwidth}
p{0.14\textwidth}p{0.14\textwidth}
p{0.08\textwidth}p{0.08\textwidth}X}
\toprule
Model & $F(z)$ & $\Phi(r)$ & $\rho$ & $\ell$
& Physical background \\
\midrule
Fractional integral
  & $z^{-a}$
  & $r^{-a}$
  & $-a$ & $-a$
  & Power-type memory \\
Fractional Maxwell
  & $c_1z^{-a}+c_2z^{-b}$
  & $c_1r^{-a}+c_2r^{-b}$
  & $-b$ & $-a$
  & Series connection of fractional viscoelastic elements \\
Fractional Kelvin--Voigt
  & $c_1z^a+c_2z^b$
  & $c_1r^a+c_2r^b$
  & $a$ & $b$
  & Parallel connection of fractional viscoelastic elements \\
Distributed order
  & $\displaystyle\int_0^1z^{-q}\,\mathrm dq$
  & $\displaystyle\int_0^1r^{-q}\,\mathrm dq$
  & $-1$ & $0$
  & Continuous distribution of fractional memory orders \\
Logarithmic power
  & $-z^{-a}\log z$
  & $r^{-a}(c_a+|\log r|)$
  & $-a$ & $-a$
  & Fractional-order sensitivity of power-type memory \\
\bottomrule
\end{tabularx}
\end{table}

\section{Numerical experiments}
\label{sec:numerical-experiments}
We investigate the distributed-order operator introduced in Section~\ref{subsec:distributed-memory}. First, its response to the ramp input $g_2(t)=t$ provides an explicit integral reference for assessing the convergence rates predicted by \eqref{eq:application-power-error}. We then study a corresponding evolution problem with smooth and nonsmooth initial data. We use backward Euler (BE) and second-order backward differentiation (SBD) convolution quadrature, generated by
\[
\delta(\xi)=1-\xi,
\qquad
\delta(\xi)=\frac32-2\xi+\frac12\xi^2,
\]
respectively, with the appropriate starting correction
for SBD. For convergence tests at the fixed final time
$T=0.1$, we take $N=5,10,20,40,80$ and $\tau=T/N$.
The short-time tests use SBD with $N=1000$ and
$\tau=t/N$ at $t=10^{-3},\ldots,10^{-8}$.

\subsection{Analysis of the distributed-order PDE}
\label{subsec:experiment-distributed}

Let $A=-\partial_{xx}$ denote the Dirichlet Laplacian
on $\Omega=(0,1)$, with
$D(A)=H^2(\Omega)\cap H_0^1(\Omega)$.
The operator $A$ is positive definite and self-adjoint
on $L^2(\Omega)$, and $-A$ generates a bounded analytic
semigroup. In particular, for any
$\theta\in(0,\pi/2)$,
\[
\|(zI+A)^{-1}\|
\leq C_\theta |z|^{-1},
\qquad z\in\Sigma_{\pi-\theta}.
\]

We consider the distributed-order evolution problem
\begin{equation}\label{eq:experiment-distributed-pde}
\begin{cases}
\displaystyle
\partial_t u+\int_0^1D_t^q Au\,\mathrm dq=0,
&x\in(0,1),\quad 0<t\le T,\\
u(0,t)=u(1,t)=0,
&0<t\le T,\\
u(x,0)=v(x),
&x\in(0,1),
\end{cases}
\end{equation}
where $D_t^q$ denotes the Riemann--Liouville derivative and the order density is uniform on $[0,1]$. Taking the Laplace transform gives
\[
z\widehat u(z)-v+
\int_0^1z^q A\widehat u(z)\,\mathrm dq=0.
\]
Hence,
\[
\widehat u(z)
=\frac1z(I+F(z)A)^{-1}v,
\]
where the integrated memory symbol is
\[
F(z)
=\int_0^1z^{q-1}\,\mathrm dq
=\int_0^1z^{-q}\,\mathrm dq
=\frac{1-z^{-1}}{\log z}.
\]
The inverse Laplace transform yields
\[
u(t)=\frac{1}{2\pi\mathrm i}
\int_{\Gamma_{\delta,\pi-\theta}}
e^{zt}\frac1z(I+F(z)A)^{-1}v\,\mathrm dz,
\]
where $\Gamma_{\delta,\pi-\theta}$ denotes the
standard sectorial contour.

We first establish sectorial bounds for the distributed-order
Laplace symbol $F(z)$. Let $g(z)=F(z)^{-1}$. For $z=re^{\mathrm i\varphi}\in\Sigma_{\pi-\theta}$, the arguments of $z^{-q}$ lie in an interval of width $|\varphi|$. Since the order integral has positive weights, we obtain $g(z)\in\Sigma_{\pi-\theta}$. Moreover, the triangle inequality and a rotation by $\varphi/2$ give
\[
\sin(\theta/2)\Phi(|z|)
\leq |F(z)|\leq\Phi(|z|),
\]
where
$\Phi(r)=\int_0^1r^{-q}\,\mathrm dq$.
Consequently,
\[
\frac1{\Phi(|z|)}
\leq |g(z)|
\leq C\frac1{\Phi(|z|)}.
\]

For the spatial discretization, we employ
continuous piecewise linear finite elements
on a uniform mesh with $h=2^{-11}$.
Let $A_h$ denote the discrete Dirichlet Laplacian,
defined by
\[
(A_hw_h,\chi_h)_{L^2(\Omega)}
=(\partial_xw_h,\partial_x\chi_h)_{L^2(\Omega)}
\]
for all finite element functions
$w_h,\chi_h\in V_h\subset H_0^1(\Omega)$.
We consider two initial data:
\begin{enumerate}
\item[(a)] smooth data: $v(x)=\sin(2\pi x)$;
\item[(b)] nonsmooth data: $v(x)=\chi_{(0,1/2]}(x)$.
\end{enumerate}
The initial approximation is $v_h=R_hv$ for the smooth initial data (a) and $v_h=P_hv$ for the nonsmooth initial data (b), where $R_h$
and $P_h$ are the Ritz and $L^2$ projections, respectively.

We consider the semidiscrete Galerkin approximation
of \eqref{eq:experiment-distributed-pde}:
find $u_h(t)\in V_h$ such that
\[
\partial_t u_h+\int_0^1D_t^q A_hu_h\,\mathrm dq=0,
\]
with $u_h(0)=v_h$. By integrating the semidiscrete equation from
$0$ to $t$, we arrive at
\[
u_h+F(\partial_t)A_hu_h=v_h.
\]
Applying the Laplace transform gives
\[
\widehat u_h(z)=z^{-1}G_h(z)v_h,
\qquad
G_h(z)=(I+F(z)A_h)^{-1}.
\]
By the estimate of $F(z)$, we have
\[
|1+\lambda F(z)|
\geq C(1+\lambda|F(z)|),
\qquad \lambda\geq0.
\]
Since $A_h$ is positive definite and
self-adjoint, it follows that
\begin{equation}\label{eq:distributed-resolvent-bounds}
\|G_h(z)\|+\|I-G_h(z)\|\leq C,
\qquad
\|F(z)G_h(z)\|\leq C\Phi(|z|),
\end{equation}
where $C$ is independent of $h$.

We now turn to the temporal discretization.
Upon introducing
\[
Q_h(z)=z(G_h(z)-I)
=-zF(z)G_h(z)A_h,
\]
the semidiscrete solution can be represented as
\[
u_h(t)=v_h+[Q_h(\partial_t)g_2](t)v_h,
\qquad g_2(t)=t.
\]
Following the convolution quadrature approach
in \cite{lubich1996nonsmooth,bazhlekova2015analysis},
we write the BE and corrected SBD approximations as
\begin{equation}\label{eq:distributed-increment-cq}
U_h^n
=v_h+[Q_h(\partial_\tau)g_2](t_n)v_h,
\end{equation}
and obtain the following time-discretization error bounds
\begin{equation}\label{eq:distributed-pde-time-error}
\|U_h^n-u_h(t_n)\|_{L^2(\Omega)}
\leq C\tau^p t_n^{-p}
\begin{cases}
\Phi(t_n^{-1})\|Av\|_{L^2(\Omega)},
& v\in D(A),\quad v_h=R_hv,\\
\|v\|_{L^2(\Omega)},
& v\in L^2(\Omega),\quad v_h=P_hv,
\end{cases}
\end{equation}
where $p=1$ for BE and $p=2$ for corrected SBD. The constant $C$ is independent of $h$ and $\tau$. Consequently, the estimates give first-order temporal convergence for BE and second-order convergence for corrected SBD at any fixed positive time, for both types of initial data.

\subsection{Direct convolution response tests}
\label{subsec:experiment-response}

We first test the direct convolution response
\[
y(t)=F(\partial_t)g_2(t),
\qquad
y_\tau^n=F(\partial_\tau)g_2(t_n),
\qquad g_2(t)=t.
\]
The exact response and the associated radial
factor are
\[
y(t)=\int_0^1
\frac{t^{q+1}}{\Gamma(q+2)}\,\mathrm dq,
\qquad
\Phi(t^{-1})
=\frac{1-t}{\log(1/t)},
\quad 0<t<1.
\]
By \eqref{eq:application-power-error},
the absolute error satisfies
\begin{equation}\label{eq:direct-response-error}
\mathcal E_\tau(t_n)
:=|y_\tau^n-y(t_n)|
\leq C\tau^pt_n^{1-p}\Phi(t_n^{-1}),
\qquad p=1,2.
\end{equation}
Table~\ref{tab:direct-cq-1} reports the errors
at $T=0.1$, with convergence rates computed by
\[
\mathrm{rate}
=
\frac{\log(\mathcal E_\tau(T)/
\mathcal E_{\tau/2}(T))}{\log2}.
\]
The observed rates approach one for BE and
two for SBD, in agreement with the theoretical
temporal orders.

\begin{table}[htbp]
\centering
\caption{Absolute errors for the direct CQ response
to $g_2(t)=t$ at $T=0.1$.}
\label{tab:direct-cq-1}
\setlength{\tabcolsep}{4mm}{
\begin{tabular}{ccccc}
\toprule
& \multicolumn{2}{c}{BE}
& \multicolumn{2}{c}{SBD} \\
\cmidrule(lr){2-3}\cmidrule(l){4-5}
$N$ & Error & Rate & Error & Rate \\
\midrule
5  & $1.35\times10^{-3}$ & --
   & $5.74\times10^{-5}$ & -- \\
10 & $6.83\times10^{-4}$ & 0.982
   & $1.33\times10^{-5}$ & 2.113 \\
20 & $3.43\times10^{-4}$ & 0.991
   & $3.22\times10^{-6}$ & 2.044 \\
40 & $1.72\times10^{-4}$ & 0.995
   & $7.93\times10^{-7}$ & 2.020 \\
80 & $8.63\times10^{-5}$ & 0.998
   & $1.97\times10^{-7}$ & 2.010 \\
\bottomrule
\end{tabular}}
\end{table}

To examine the short-time dependence, we use
SBD with $N=1000$ and $\tau=t/N$.
In this case,
\eqref{eq:direct-response-error} reduces to
\begin{equation}\label{eq:distributed-response-bound-comparison}
\mathcal E_\tau(t)
\leq CN^{-2}t\Phi(t^{-1})
=CN^{-2}t\frac{1-t}{\log(1/t)}.
\end{equation}
the radial estimate retains a logarithmic factor that is lost when $\Phi(t^{-1})$ is replaced by a constant. The latter replacement gives the coarser bound $CN^{-2}t$. Table~\ref{tab:distributed-response-short} compares the errors normalized by the radial scale $N^{-2}t\Phi(t^{-1})$ and the coarser
power scale $N^{-2}t$. The absolute errors decrease as $t$ approaches zero, while the errors normalized by $N^{-2}t\Phi(t^{-1})$ remain bounded and become nearly constant for small $t$. In contrast, the normalization by $N^{-2}t$ does not account for the additional logarithmic decay.

\begin{table}[htbp]
\centering
\caption{Short-time SBD errors for the distributed-order
response, with $N=1000$ and $\tau=t/N$.}
\label{tab:distributed-response-short}
\setlength{\tabcolsep}{3mm}
\begin{tabular}{cccc}
\toprule
$t$ & Absolute error &
$\mathcal E_\tau/(N^{-2}t\Phi(t^{-1}))$ &
$\mathcal E_\tau/(N^{-2}t)$ \\
\midrule
$10^{-3}$ & $2.74\times10^{-13}$
& $1.89\times10^{-3}$ & $2.74\times10^{-4}$ \\
$10^{-4}$ & $1.26\times10^{-14}$
& $1.16\times10^{-3}$ & $1.26\times10^{-4}$ \\
$10^{-5}$ & $1.96\times10^{-15}$
& $2.26\times10^{-3}$ & $1.96\times10^{-4}$ \\
$10^{-6}$ & $1.90\times10^{-16}$
& $2.62\times10^{-3}$ & $1.90\times10^{-4}$ \\
$10^{-7}$ & $1.67\times10^{-17}$
& $2.70\times10^{-3}$ & $1.67\times10^{-4}$ \\
$10^{-8}$ & $1.44\times10^{-18}$
& $2.65\times10^{-3}$ & $1.44\times10^{-4}$ \\
\bottomrule
\end{tabular}
\end{table}

\subsection{PDE tests}
\label{subsec:experiment-pde}

We next examine temporal convergence for the distributed-order evolution problem \eqref{eq:experiment-distributed-pde}. All computations use the fixed spatial mesh and initial projections specified above. The relative temporal error is defined by
\[
E_\tau(T)
=
\frac{\|u_h(T)-U_h^N\|_{L^2(\Omega)}}
{\|v\|_{L^2(\Omega)}}.
\]
Table~\ref{tab:distributed-time} shows first-order convergence for BE and second-order convergence for corrected SBD with both smooth and nonsmooth initial data, which are consistent with the theoretical error estimates in \eqref{eq:distributed-pde-time-error}.

\begin{table}[htbp]
\centering
\caption{Normalized temporal $L^2$ errors and rates
at $T=0.1$, with $h=2^{-11}$ and $\tau=T/N$.}
\label{tab:distributed-time}
\setlength{\tabcolsep}{4mm}{
\begin{tabular}{ccccc}
\toprule
& \multicolumn{2}{c}{BE}
& \multicolumn{2}{c}{SBD} \\
\cmidrule(lr){2-3}\cmidrule(l){4-5}
$N$ & Error & Rate & Error & Rate \\
\midrule
\multicolumn{5}{l}{(a) Smooth initial data} \\
5  & $2.11\times10^{-3}$ & --
   & $4.60\times10^{-4}$ & -- \\
10 & $1.04\times10^{-3}$ & 1.023
   & $9.82\times10^{-5}$ & 2.227 \\
20 & $5.15\times10^{-4}$ & 1.012
   & $2.30\times10^{-5}$ & 2.092 \\
40 & $2.56\times10^{-4}$ & 1.006
   & $5.59\times10^{-6}$ & 2.044 \\
80 & $1.28\times10^{-4}$ & 1.003
   & $1.38\times10^{-6}$ & 2.022 \\
\midrule
\multicolumn{5}{l}{(b) Nonsmooth initial data} \\
5  & $3.88\times10^{-3}$ & --
   & $7.77\times10^{-4}$ & -- \\
10 & $1.92\times10^{-3}$ & 1.015
   & $1.67\times10^{-4}$ & 2.218 \\
20 & $9.54\times10^{-4}$ & 1.008
   & $3.93\times10^{-5}$ & 2.087 \\
40 & $4.76\times10^{-4}$ & 1.004
   & $9.55\times10^{-6}$ & 2.042 \\
80 & $2.38\times10^{-4}$ & 1.002
   & $2.35\times10^{-6}$ & 2.020 \\
\bottomrule
\end{tabular}}
\end{table}

Finally, we investigate the short-time behavior
of the SBD scheme. Let $t_k=10^{-k}$ and
$\tau_k=t_k/N$, with $N=1000$. We define
\[
E(t_k)
=
\frac{\|u_h(t_k)-U_h^N\|_{L^2(\Omega)}}
{\|v\|_{L^2(\Omega)}},
\qquad
\eta_k
=
\frac{\log(E(t_{k-1})/E(t_k))}{\log10}.
\]
Table~\ref{tab:distributed-short} shows that the temporal
errors decrease slowly as $t$ and $\tau=t/N$ approach zero.
The corresponding $\eta_k$ also decrease. To interpret this
trend, note that if $E(t)\sim Ct^\alpha$ for some $\alpha>0$,
then $\eta_k\to\alpha$ as $k\to\infty$. In contrast,
for logarithmic decay of the form
$E(t)=C|\log t|^{-\sigma}$ with $\sigma>0$,
we have
\[
\eta_k
=\sigma\log_{10}\left(\frac{k}{k-1}\right)
\sim\frac{\sigma}{k\log 10}
\longrightarrow0.
\]
Thus, power decay is characterized by $\eta_k$ approaching a positive constant, whereas logarithmic decay leads to $\eta_k$ decreasing to zero. The decreasing values reported in Table~\ref{tab:distributed-short} are consistent with logarithmic short-time behavior.

\begin{table}[htbp]
\centering
\caption{Short-time SBD errors with $h=2^{-11}$
and $\tau=t/1000$.}
\label{tab:distributed-short}
\setlength{\tabcolsep}{4mm}{
\begin{tabular}{ccccc}
\toprule
& \multicolumn{2}{c}{(a) Smooth data}
& \multicolumn{2}{c}{(b) Nonsmooth data} \\
\cmidrule(lr){2-3}\cmidrule(l){4-5}
$t$ & Error & $\eta_k$ & Error & $\eta_k$ \\
\midrule
$10^{-3}$ & $7.23\times10^{-9}$ & --
& $9.13\times10^{-9}$ & -- \\
$10^{-4}$ & $6.58\times10^{-9}$ & 0.041
& $7.53\times10^{-9}$ & 0.084 \\
$10^{-5}$ & $6.01\times10^{-9}$ & 0.039
& $6.37\times10^{-9}$ & 0.073 \\
$10^{-6}$ & $5.52\times10^{-9}$ & 0.037
& $5.50\times10^{-9}$ & 0.064 \\
$10^{-7}$ & $5.08\times10^{-9}$ & 0.036
& $4.82\times10^{-9}$ & 0.057 \\
$10^{-8}$ & $4.70\times10^{-9}$ & 0.034
& $4.28\times10^{-9}$ & 0.052 \\
\bottomrule
\end{tabular}}
\end{table}

\section{Conclusion}\label{sec:conclusion}

We developed stability and error estimates for multistep convolution quadrature under positive continuous radial bounds that are regularly varying at zero and infinity. The resulting global comparison gives estimates valid at all positive grid times, with constants independent of a terminal time. The analysis applies to standard multistep CQ schemes and obtains the resulting estimates directly in terms of $\Phi(t^{-1})$. This formulation covers symbols that admit no global single-power bound on the sector and preserves logarithmic and multiscale factors that would be lost under a coarser power estimate.

Applications to distributed-order and logarithmic-power memory operators demonstrate how the radial assumption can be verified directly from the corresponding Laplace symbols. The response experiments recover the expected first- and second-order temporal convergence and illustrate the short-time behavior described by the radial error bounds. The PDE experiments further demonstrate temporal convergence for both smooth and nonsmooth initial data.

%\section*{Declarations}
%
%Some journals require declarations to be submitted in a standardised format. Please check the Instructions for Authors of the journal to which you are submitting to see if you need to complete this section. If yes, your manuscript must contain the following sections under the heading `Declarations':
%
%\begin{itemize}
%\item Funding
%\item Conflict of interest/Competing interests (check journal-specific guidelines for which heading to use)
%\item Ethics approval and consent to participate
%\item Consent for publication
%\item Data availability
%\item Materials availability
%\item Code availability
%\item Author contribution
%\end{itemize}
%
%\noindent
%If any of the sections are not relevant to your manuscript, please include the heading and write `Not applicable' for that section.

\begin{appendices}

%\section{}\label{secA1}

%%=============================================%%
%% For submissions to Nature Portfolio Journals %%
%% please use the heading ``Extended Data''.   %%
%%=============================================%%

%%=============================================================%%
%% Sample for another appendix section			       %%
%%=============================================================%%

%% \section{Example of another appendix section}\label{secA2}%
%% Appendices may be used for helpful, supporting or essential material that would otherwise
%% clutter, break up or be distracting to the text. Appendices can consist of sections, figures,
%% tables and equations etc.

\end{appendices}

%%===========================================================================================%%
%% If you are submitting to one of the Nature Portfolio journals, using the eJP submission   %%
%% system, please include the references within the manuscript file itself. You may do this  %%
%% by copying the reference list from your .bbl file, paste it into the main manuscript .tex %%
%% file, and delete the associated \verb+\bibliography+ commands.                            %%
%%===========================================================================================%%

\bibliography{sn-bibliography}% common bib file
%% if required, the content of .bbl file can be included here once bbl is generated
%%\input sn-article.bbl

\end{document}